\newtheorem{thm}{Theorem}
\newtheorem{lemma} [thm]{Lemma}
\newtheorem{cor}[thm]{Corollary}
\newtheorem{prop}[thm]{Proposition}
\theoremstyle{definition}
\newtheorem{example}[thm]{Example}
\newcommand{\beas}{\begin{eqnarray*}}
\newcommand{\eeas}{\end{eqnarray*}}
\newcommand{\bea}{\begin{eqnarray}}
\newcommand{\eea}{\end{eqnarray}}
\newcommand{\be}{\begin{enumerate}}
\newcommand{\ee}{\end{enumerate}}
\newcommand{\beq}{\begin{equation}}
\newcommand{\eeq}{\end{equation}}
\newcommand{\sn}{\mathfrak{S}_n}
\newcommand{\fs}{\mathfrak{S}}
\newcommand{\cc}{\mathbb{C}}
\newcommand{\zz}{\mathbb{Z}}
\newcommand{\fra}{\cc\{\{x\}\}}
\newcommand{\afra}{\cc_{\mathrm{alg}}\{\{x\}\}}
\newcommand{\ec}{\langle 12\cdots n\rangle}
\newcommand{\bm}[1]{{\mbox{\boldmath $#1$}}}
\makeatletter \@addtoreset{equation}{section}
\begin{document}

\title{An Equivalence Relation on the Symmetric Group and
  Multiplicity-free Flag $h$-Vectors}

\author{Richard P.
  Stanley\footnote{ Department of Mathematics, Massachusetts Institute
    of Mathematics, Cambridge, MA 02139, USA. Email:
    rstan@math.mit.edu. The
    author's contribution is based upon work supported by the National
    Science Foundation under grants nos.\ DMS-0604423 and DMS-1068625.}}

\date{August 15, 2012}

\maketitle



\begin{abstract}
We consider the equivalence relation $\sim$ on the symmetric group 
$\sn$ generated by the interchange of two adjacent elements $a_i$ and
$a_{i+1}$ of $w=a_1 \cdots a_n\in\sn$ such that $|a_i-a_{i+1}|=1$. We
count the number of equivalence classes and the sizes of the
equivalence classes. The results are generalized to permutations of
multisets. In the
original problem, the equivalence class containing the identity
permutation is the set of linear extensions of a certain
poset. Further investigation yields a characterization of all finite
graded posets whose flag $h$-vector takes on only the values $0,\pm
1$. 
\end{abstract}



\section{Introduction.} \label{sec1} 
Let $k\geq 2$. Define two permutations $u$ and $v$ (regarded as words
$a_1 a_2\cdots a_n$) in the symmetric group $\sn$ to be
\emph{equivalent} if $v$ can be obtained from $u$ by a sequence of
interchanges of adjacent terms that differ by at least $j$. It is a
nice exercise to show that the number $f_j(n)$ of equivalence classes
of this relation (an obvious equivalence relation) is given by
  $$ f_j(n) = \left\{ \begin{array}{rl} n!, & n\leq j\\[.5em]
                j!\cdot j^{n-j}, & n>j. \end{array} \right. $$
Namely, one can show that every equivalence class contains a unique
permutation $w=b_1b_2\cdots b_n$ for which we never have $b_i \geq
b_{i+1}+j$. To count these permutations $w$ for $n>j$, we first have
$j!$ ways of ordering $1,2,\dots,j$ within $w$. Then insert $j+1$ in
$j$ ways, i.e., at the end or preceding any $i \neq 1$. Next insert
$j+2$ in $j$ ways, etc. The case $j=3$ of this argument appears in
\cite[A025192]{oeis} and is attributed to Joel Lewis, November 14,
2006. Some equivalence relations on $\sn$ of a similar nature are
pursued by Linton et al.\ \cite{linton}.

The above result suggests looking at some similar equivalence
relations on $\sn$. The one we will consider here is the following:
define $u$ and $v$ to be \emph{equivalent}, denoted $u\sim v$, if $v$
can be obtained from $u$ by interchanging adjacent terms that differ
by exactly one. For instance, when $n=3$ we have the two equivalence
classes $\{123, 213, 312\}$ and $\{321,231, 312\}$. 
We will determine
the number of classes, the number of one-element classes, and the
sizes of the equivalence classes (always a product of Fibonacci
numbers). It turns out that the class containing the identity
permutation $12\cdots n$ may be regarded as the set of linear
extensions of a certain $n$-element poset $P_n$. Moreover, $P_n$ has
the most number of linear extensions of any $n$-element poset on the
vertex set $[n]=\{1,2,\dots,n\}$ such that $i<j$ in $P$ implies $i<j$
in $\zz$, and such that all linear extensions of $P$
(regarded as permutations of $[n]$) have a different descent set. This
result leads to the complete classification and enumeration of finite
graded posets of rank~$n$ whose flag $h$-vector is
``multiplicity-free,'' i.e., assumes only the values 0 and $\pm 1$.

\section{The number of equivalence classes.} 
To obtain the number of equivalence classes, we first define a
canonical element in each class. We then count these canonical
elements by the Principle of Inclusion-Exclusion. We call a
permutation $w=a_1a_2\cdots a_n\in\sn$ \emph{salient} if we never have
$a_i=a_{i+1}+1$ ($1\leq i\leq n-1$) or $a_i=a_{i+1}+2= a_{i+2}+1$
($1\leq i\leq n-2$). For instance, there are eight salient
permutations in $\fs_4$: 1234, 1342, 2314, 2341, 2413, 3142, 3412,
4123. 

\begin{lemma} 
\label{lemma1}
Every equivalence class with respect to the equivalence relation
$\sim$ contains exactly one salient permutation. 
\end{lemma}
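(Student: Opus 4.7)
\emph{Proof plan.} The plan is to treat Lemma~\ref{lemma1} as a statement about normal forms of a terminating, confluent rewriting system on $\sn$. We introduce two reduction rules: (A) rewrite a factor $(k+1)k$ as $k(k+1)$, and (B) rewrite a factor $(m+2)m(m+1)$ as $(m+1)(m+2)m$. Rule (A) is itself a single $\sim$-swap, and rule (B) is the composition of the two $\sim$-swaps $(m+2)m(m+1)\to(m+2)(m+1)m\to(m+1)(m+2)m$, so each rule preserves the $\sim$-class of its input. Both rules strictly decrease the leftmost altered entry and hence strictly decrease the permutation in lexicographic order, so every chain of reductions terminates. A permutation admits no rule application precisely when it is salient, so every $w$ reduces to \emph{at least one} salient in its $\sim$-class, giving existence.

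For uniqueness we invoke Newman's lemma: for a terminating system, local confluence suffices for unique normal forms. Local confluence is checked via critical pairs arising from overlapping left-hand sides. Disjoint rule applications commute trivially, and most potential overlaps would force a repeated entry and are therefore impossible in a permutation; this leaves exactly two critical pairs. The first is the three-letter pattern $(k+2)(k+1)k$, on which rule (A) may act at either descent, and both reduction orders converge at $(k+1)(k+2)k$. The second is the four-letter pattern $(m+3)(m+2)m(m+1)$, on which rule (A) may act on the prefix $(m+3)(m+2)$ and rule (B) on the suffix $(m+2)m(m+1)$; a direct calculation shows that both orders end at $(m+2)(m+3)m(m+1)$.

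Given unique normal forms $s(w)$, the lemma follows at once: a single $\sim$-swap between distinct permutations $w$ and $w'$ is always an instance of rule (A) or its inverse, so one of them reduces in one step to the other and hence $s(w)=s(w')$. Extending by transitivity, $s$ is constant on every $\sim$-class, so each class contains the salient $s(w)$ and no other.

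We expect the main obstacle to be the (A)--(B) critical pair: applying (A) first produces the salient $(m+2)(m+3)m(m+1)$ in a single step, whereas applying (B) first produces the non-salient $(m+3)(m+1)(m+2)m$, from which one further (B) step and then one (A) step are needed to reach $(m+2)(m+3)m(m+1)$. Verifying that no alternative rule choices along this chain derail convergence, and that no further critical pair has been overlooked in the case analysis, is the most delicate part of the argument; the other cases are short direct checks.
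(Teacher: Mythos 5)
Your argument is correct, but it reaches the hard half of the lemma by a genuinely different route than the paper. The existence half is essentially the paper's: Stanley likewise observes that both reductions are lexicographically decreasing and concludes that the lexicographically least element of each class is salient. For uniqueness, however, the paper argues directly: it supposes a class contains a salient permutation other than its lex-least element $v$, looks at the first position where the two differ, and traces which adjacent transpositions would be needed to carry $v$ to the other salient word, deriving a contradiction. You instead package the reductions as a string-rewriting system and invoke Newman's lemma, reducing uniqueness to joinability of critical pairs. Your enumeration of critical pairs is complete -- the overlaps you discard really do force a repeated letter or an inconsistent value, leaving only $(k+2)(k+1)k$ and $(m+3)(m+2)m(m+1)$ -- and both resolutions check out: $(k+2)k(k+1)$ is a (B)-redex closing the first diagram at $(k+1)(k+2)k$, and the chain $(m+3)(m+1)(m+2)m \to (m+2)(m+3)(m+1)m \to (m+2)(m+3)m(m+1)$ closes the second. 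The concluding step, that a single $\sim$-swap is always rule (A) or its inverse so that normal forms are constant on $\sim$-classes, is also right. One small point: the worry in your last paragraph about ``alternative rule choices derailing convergence'' is unnecessary, since Newman's lemma asks only for \emph{some} common reduct of each critical pair; termination then upgrades local confluence to uniqueness of normal forms with no further case analysis. What your approach buys is a mechanically auditable uniqueness proof (the paper's tracking argument is terse and harder to verify); what the paper's buys is brevity and the explicit identification of the salient permutation as the lexicographic minimum of its class, though that fact also falls out of your setup since both rules are lex-decreasing.
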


\begin{proof}
Let $E$ be an equivalence class, and let $w=a_1a_2\cdots a_n$ be the
lexicographically least element of $E$. Then we cannot have
$a_i=a_{i+1}+1$ for some $i$; otherwise we could interchange $a_i$ and
$a_{i+1}$ to obtain a lexicographically smaller permutation in
  $E$. Similarly if $a_i=a_{i+1}+2=a_{i+2}+1$ then we can replace $a_i
  a_{i+1} a_{i+2}$ with $a_{i+2}a_ia_{i+1}$. Hence $w$ is salient.

It remains to show that a class $E$ cannot contain a salient
permutation $w=b_1b_2\cdots b_n$ that is not the lexicographically least
element $v=a_1a_2\cdots a_n$ of $E$. Let $i$ be the least index for
which $a_i\neq b_i$. Since $v\sim w$, there must be some $b_j$
satisfying $j>i$ and $b_j<b_i$ that is interchanged with
$b_i$ in the transformation of $v$ to $w$ by adjacent transpositions
of consecutive integers. Hence $b_j=b_i+1$. If $j=i+1$ then $v$ is not
salient. If $j=i+2$ then we must have $b_{i+1}=b_i+2$ in order to move
$b_{j+2}$ past $b_{j+1}$, so again $v$ is not salient. If $j>i+2$
then some element $b_k$ between $b_i$ and $b_j$ in $v$ satisfies
$|b_j-b_k|>1$, so we cannot move $b_j$ past $b_k$ unless we first
interchange $b_i$ and $b_k$ (which must therefore equal $b_i+1$). But
then after $b_i$ and $b_j$ are interchanged, we cannot move $b_k$ back
to the right of $b_i$. Hence $v$ and $w$ cannot be equivalent, a
contradiction completing the proof.
\end{proof}

\begin{thm} \label{thm:fn}
Let $f(n)$ be the number of equivalence classes of the relation $\sim$
on $\sn$, with $f(0)=1$.  Then
  \beq f(n) = \sum_{j=0}^{\lfloor
    n/2\rfloor}(-1)^j(n-j)!\binom{n-j}{j}. \label{eq:fn} \eeq
Equivalently,
   $$ \sum_{n\geq 0}f(n)x^n = \sum_{m\geq 0}m!(x(1-x))^m. $$
\end{thm}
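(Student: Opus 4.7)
My plan is to count salient permutations (which equal $f(n)$ by Lemma 2.1) by inclusion-exclusion over the two forbidden local configurations, and then repackage the result as the stated generating function.

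For a permutation $w=a_1\cdots a_n$, call position $i$ a \emph{D-marker} if $a_i=a_{i+1}+1$ and a \emph{T-marker} if $a_i=a_{i+1}+2=a_{i+2}+1$. Let $M(w)$ be the set of all D- and T-markers of $w$. Salient permutations are exactly those with $M(w)=\emptyset$, so the standard trick
\[
f(n)=\sum_{w}\sum_{S\subseteq M(w)}(-1)^{|S|}=\sum_{S}(-1)^{|S|}N(S)
\]
expresses $f(n)$ as a signed sum over all sets $S$ of markers, where $N(S)$ is the number of permutations realizing every marker in $S$. Only ``compatible'' $S$ contribute.

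The key step would be a short case analysis of how two markers can share positions: two D-markers at $i,i+1$ coexist and force $c+2,c+1,c$, while a D-marker at $i$ together with a T-marker at $i+1$ forces $c+3,c+2,c,c+1$; every other overlap of two markers within distance $2$ yields a value contradiction. It follows that a compatible $S$ decomposes uniquely into maximal connected \emph{chunks}, and each chunk has one of two shapes: \emph{shape A} of length $\ell\geq 2$, using $\ell-1$ consecutive D-markers with forced values $c+\ell-1,\dots,c$; or \emph{shape B} of length $\ell\geq 3$, using $\ell-3$ D-markers followed by one T-marker with forced values $c+\ell-1,\dots,c+2,c,c+1$. The positions not covered by any chunk are singletons.

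For a compatible $S$ producing $m$ blocks in total (chunks plus singletons), I would establish $N(S)=m!$. Each block needs an interval of $[n]$ of the prescribed length as its value set; the blocks are distinguished by position, the value-intervals are ordered by minimum element, and assigning intervals to blocks is the same as choosing a linear ordering of $m$ labeled objects. Grouping the inclusion-exclusion sum by $m$ then gives
\[
\sum_{n\geq 0}f(n)\,x^n=\sum_{m\geq 0}m!\,B(x)^m,\qquad B(x)=x+C(x),
\]
with $x$ the singleton contribution and $C(x)=\sum_{\ell\geq 2}c_\ell x^\ell$ the signed chunk generating function. A shape-A chunk of length $\ell$ carries sign $(-1)^{\ell-1}$ and a shape-B chunk of length $\ell$ carries $(-1)^{\ell-2}$; at $\ell=2$ only shape A exists so $c_2=-1$, and at every $\ell\geq 3$ the two contributions are opposite and cancel, so $c_\ell=0$. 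Hence $B(x)=x-x^2=x(1-x)$, giving the displayed generating function; extracting the coefficient of $x^n$ from $\sum_{m}m!\,(x(1-x))^m$ then yields the closed form.

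The main obstacle will be the chunk classification: one has to verify by a careful, slightly tedious case check that the only compatible overlaps within distance $2$ of two markers are the two extensions listed, and that a chunk containing a T-marker cannot be extended past it. Once this is in hand, the clean $N(S)=m!$ formula and the pointwise cancellation between shapes A and B at $\ell\geq 3$ collapse the generating function to its final form.
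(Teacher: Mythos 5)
Your proposal is correct and follows essentially the same route as the paper: reduce to counting salient permutations via Lemma \ref{lemma1}, run inclusion--exclusion over the two forbidden local patterns, and observe that the two types of maximal ``chunks'' of length $\ell\geq 3$ (the straight decreasing run and the run ending in $c+2,c,c+1$) carry opposite signs and cancel, leaving only isolated length-two descents. Your positional indexing of the markers and the packaging of the final count as $\sum_m m!\,(x(1-x))^m$ are only cosmetic variations on the paper's value-indexed events $A_i,B_i$ and its direct computation of the coefficient $(-1)^j(n-j)!\binom{n-j}{j}$.
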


\begin{proof} 
By Lemma~\ref{lemma1}, we need to count the number of salient
permutations $w\in\sn$. The proof is by an inclusion-exclusion
argument. Let $A_i$, $1\leq i\leq n-1$,  be the set of permutations
$v\in\sn$ that contain the factor (i.e., consecutive terms) $i+1,
i$. Let $B_i$, $1\leq i\leq n-2$, be the set of $v\in\sn$ that contain
the factor $i+2,i,i+1$. Let $C_1,\dots,C_{2n-3}$ be some indexing of
the $A_i$'s and $B_i$'s. By the Principle of Inclusion-Exclusion, we
have
  \beq f(n)=\sum_{S\subseteq [2n-3]}(-1)^{\#S}\#\bigcap_{i\in S}C_i, 
   \label{eq:incex} \eeq
where the empty intersection of the $C_i$'s is $\sn$. A little thought
shows that any intersection of the $C_i$'s consists of permutations
that contain some set of nonoverlapping factors $j,j-1,\dots,i+1,i$ and
$j,j-1,\dots,i+3,i+2,i,i+1$. Now permutations containing the factor
$j,j-1,\dots,i+1,i$ are those in $A_{j-1}\cap A_{j-2}\cap\cdots\cap
A_i$ (an intersection of $j-i$ sets), while permutations containing
$j,j-1,\dots,i+3,i+2,i,i+1$ are those in $A_{j-1}\cap A_{j-2}\cap
\cdots\cap A_{i+2}\cap B_i$ (an intersection of $j-i-1$ sets). Since
$(-1)^{j-i} +(-1)^{j-i-1}=0$, it follows that all terms on the
right-hand side of equation~\eqref{eq:incex} involving such
intersections will cancel out. The only
surviving terms will be the intersections $A_{i_1}\cap\cdots\cap
A_{i_j}$ where the numbers $i_1,i_1+1,i_2,i_2+1,\dots, i_j,i_j+1$
are all \emph{distinct}. The number of ways to choose such terms for a
given $j$ is the number of sequences of $j$ 2's and $n-2j$ 1's, i.e.,
$\binom{n-j}{2j}$. The number of permutations of the $j$ factors
$i_r,i_{r+1}$, $1\leq r\leq j$, and the remaining $n-2j$ elements of
$[n]$ is $(n-j)!$. Hence equation~\eqref{eq:incex} reduces to
equation~\eqref{eq:fn}, completing the proof.
\end{proof}

\textsc{Note.} There is an alternative proof based on the
Cartier-Foata theory of partially commutative monoids \cite{c-f}.
Let $M$ be the monoid with generators $g_1,\dots,g_n$ subject only to
relations of the form $g_ig_j=g_jg_i$ for certain $i$ and $j$. Let
$x_1,\dots,x_n$ be \emph{commuting} variables. If $w=g_{i_1}\cdots
g_{i_m}\in M$, then set $x^w = x_{i_1}\cdots x_{i_m}$. Define
  $$ F_M(x) = \sum_{w\in M} x^w. $$
Then a fundamental result (equivalent to
\cite[Thm.~2.4]{c-f}) of the theory asserts that 
  \beq F_M(x) = \frac{1}{\sum_S (-1)^{\#S}\prod_{g_i\in S}
    x_i}, \label{eq:cf} \eeq
where $S$ ranges over all subsets of $\{g_1,\dots,g_n\}$ (including
the empty set) whose elements pairwise commute. Consider now the case
where the relations are given by $g_i g_{i+1}=g_{i+1}g_i$, $1\leq
i\leq n-1$. Thus $f(n)$ is the coefficient of $x_1 x_2\cdots x_n$ in 
$F(x)$. Writing $[x^\alpha]G(x)$ for the coefficient of $x^\alpha =
x_1^{\alpha_1}\cdots x_n^{\alpha_n}$ in the power series $G(x)$, it
follows from equation~\eqref{eq:cf} that
  \beas f(n) & = & [x_1\cdots x_n]\frac{1}{1-\sum_{i=1}^n x_i
        +\sum_{i=1}^{n-1} x_ix_{i+1}}\\ & = & 
     [x_1\cdots x_n]\sum_{j\geq 0} \left( \sum_{i=1}^n x_i
        -\sum_{i=1}^{n-1} x_ix_{i+1}\right)^j. \eeas
A straightforward argument shows that 
   $$ [x_1\cdots x_n] \left( \sum_{i=1}^n x_i
        -\sum_{i=1}^{n-1} x_ix_{i+1}\right)^{n-j} =
     (-1)^j(n-j)!\binom{n-j}{j}, $$
and the proof follows. 

\textsc{Note.} The numbers $f(n)$ for $n\geq 0$ begin 1, 1,
1, 2, 8, 42, 258, 1824, 14664, $\dots$. This sequence appears in
\cite[A013999]{oeis} but without a combinatorial interpretation before
the present paper.

Various generalizations of Theorem~\ref{thm:fn} suggest
themselves. Here we will say a few words about the situation where
$\sn$ is replaced by all permutations of a \emph{multiset} on the set
$[n]$ with the same definition of equivalence as
before. For instance, for the multiset $M=\{1^2,2,3^2\}$ (short for
$\{1,1,2,3,3\}$), there are six equivalence classes, each with five
elements, obtained by fixing a word in $1,1,3,3$ and inserting 2 in
five different ways. Suppose that the multiset is given by
$M=\{1^{r_1},\dots, n^{r_n}\}$. According to equation~\eqref{eq:cf}, the 
number $f_M$ of equivalence classes of permutations of $M$ is the
coefficient of $x_1^{r_1}\cdots x_n^{r_n}$ in the generating function
  $$ F_n(x) = \frac{1}{1-\sum_{i=1}^n x_i+\sum_{i=1}^{n-1}
    x_ix_{i+1}}. $$  
For the case $n=4$ (and hence $n\leq 4$) we can give an explicit
formula for the coefficients of $F_n(x)$, or in fact (as suggested by
I. Gessel) for $F_n(x)^t$ where $t$ is an indeterminate. We use the
falling factorial notation $(y)_r =y(y-1)\cdots (y-r+1)$.

\begin{thm}\label{thm:k4}
We have
 \beq F_4(x)^t = \sum_{h,i,j,k\geq 0} \frac{(t+h+j-1)_j (t+h+k-1)_h
   (t+i+k-1)_{i+k}}{h!\,i!\,j!\,k!}x_1^hx_2^ix_3^kx_4^k. 
   \label{eq:f4xt} \eeq
In particular,
 \beq F_4(x) = \sum_{h,i,j,k\geq 0}\binom{h+j}{j}\binom{h+k}{k}
   \binom{i+k}{i} x_1^h  x_2^i x_3^j x_4^k. \label{eq:f4} \eeq
\end{thm}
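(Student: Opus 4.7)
Let me denote the right-hand side of \eqref{eq:f4xt} by $g(x;t)$. The plan is to show that $g(x;t)$ equals $D(x)^{-t}$, where
\[ D(x) = 1-(x_1+x_2+x_3+x_4)+x_1x_2+x_2x_3+x_3x_4 \]
is the denominator of $F_4(x)$, by collapsing the quadruple sum through three successive applications of the negative binomial (and two-variable negative multinomial) theorem. The strategy exploits the fact that the three rising-factorial factors in the summand couple the exponents only in the pairs $(h,j),(h,k),(i,k)$, corresponding to the three pairs of \emph{commuting} generators $(g_1,g_3),(g_1,g_4),(g_2,g_4)$; this lets one peel off variables one at a time.

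First I would use $(t+h+j-1)_j/j!=\binom{t+h+j-1}{j}$ and sum over $j$ to produce a factor $(1-x_3)^{-(t+h)}$. Introducing the auxiliary variable $y_1=x_1/(1-x_3)$, the combination $x_1^h(1-x_3)^{-(t+h)}$ simplifies to $y_1^h(1-x_3)^{-t}$, so the remaining $h$-dependence is $\binom{t+h+k-1}{h}y_1^h$, which sums to $(1-y_1)^{-(t+k)}$. I would then introduce $z_4=x_4/(1-y_1)$ and absorb the factor $(1-y_1)^{-k}$ into $x_4^k$, which leaves
\[ g(x;t) \;=\; (1-x_3)^{-t}(1-y_1)^{-t}\sum_{i,k\ge 0}\frac{(t+i+k-1)_{i+k}}{i!\,k!}\,x_2^i z_4^k \;=\; \bigl[(1-x_3)(1-y_1)(1-x_2-z_4)\bigr]^{-t}, \]
the last equality being the negative multinomial expansion of $(1-x_2-z_4)^{-t}$.

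The proof is then completed by the algebraic identity $(1-x_3)(1-y_1)(1-x_2-z_4)=D(x)$. Since $y_1(1-x_3)=x_1$, one has $(1-x_3)(1-y_1)=1-x_1-x_3$; and since $(1-x_1-x_3)z_4=x_4(1-x_3)$, a one-line expansion gives
\[ (1-x_1-x_3)(1-x_2-z_4) \;=\; (1-x_1-x_3)(1-x_2)-x_4(1-x_3) \;=\; D(x), \]
which establishes \eqref{eq:f4xt}. The companion formula \eqref{eq:f4} follows at once by setting $t=1$, whereupon each factor $(a)_b/b!$ collapses to the ordinary binomial coefficient $\binom{a}{b}$.

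I expect no genuine obstacle. The computational content is three standard geometric/multinomial sums and a short polynomial check. The only non-routine aspect is the choice of order $j\to h\to(i,k)$ together with the two substitutions $y_1=x_1/(1-x_3)$ and $z_4=x_4/(1-y_1)$; these are arranged so that the denominators introduced during substitution reconstitute precisely the cross-terms $x_2x_3$ and $x_3x_4$ of $D(x)$, which is why the method appears to be specific to $n\le 4$.
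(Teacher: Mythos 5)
Your proof is correct. It is, however, not the paper's primary argument: the paper proves \eqref{eq:f4xt} by reducing to nonnegative integer $t$ (since each coefficient is a polynomial in $t$) and then inducting on $t$, i.e.\ verifying directly that multiplying the claimed sum by the denominator $D(x)=1-\sum x_i+x_1x_2+x_2x_3+x_3x_4$ recovers $F_4(x)^{t-1}$. That route requires already knowing the answer and leaves a coefficient check to the reader. What you do instead is essentially the alternative argument the paper attributes to I.~Gessel --- factoring $1/D(x)$ as a product of four ``geometric'' pieces and expanding one variable at a time --- but run in the opposite direction: you start from the quadruple sum, collapse $j$, then $h$, then $(i,k)$ jointly via the negative multinomial theorem, and verify the resulting product $(1-x_3)(1-y_1)(1-x_2-z_4)=D(x)$. (Your grouping of $x_2$ and $x_4$ into one two-variable sum replaces Gessel's two separate factors $(1-x_2)$ and $\bigl(1-\frac{x_4}{(1-x_2)(1-y_1)}\bigr)$, but the products agree since $(1-y_1)(1-x_2-z_4)=(1-x_2)(1-y_1)-x_4$.) Your version has the advantage of being fully explicit and of deriving the formula rather than merely confirming a guess; it also works uniformly for arbitrary $t$ with no polynomiality reduction. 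One small note: you have silently corrected the typo $x_3^k$ in the displayed statement of \eqref{eq:f4xt} to $x_3^j$, which is clearly what is intended given \eqref{eq:f4}.
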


\begin{proof}
Since the coefficient of $x_1^h  x_2^i x_3^j x_4^k$ in $F_4(x)^t$ is a
polynomial in $t$, it suffices to assume that $t$ is a nonnegative
integer. The result can then be proved straightforwardly by induction
on $t$. Namely, the case $t=0$ is trivial. Assume for $t-1$ and let
$G(x)$ be the right-hand side of equation~\eqref{eq:f4xt}. Check that
  $$ (1-x_1-x_2-x_3-x_4+x_1x_2+x_2x_3+x_3x_4)G(x) = F_4(x)^{t-1}, $$
and verify suitable initial conditions.

Ira Gessel points out (private communication) that we can prove the
theorem without guessing the answer in advance by writing
  $$ \frac{1}{1-x_1-x_2-x_3-x_4+x_1x_2+x_2x_3+x_3x_4} = $$
  $$  \frac{1}{(1-x_2)(1-x_3)\left( \displaystyle 1-\frac{x_1}{1-x_3}
    \right)\left(1-\displaystyle \frac{x_4}{(1-x_2)\left(
    1-\frac{x_1}{1-x_3}\right)} \right)}, $$
and then expanding one variable at a time in the order $x_4, x_1, x_2,
x_3$. 
\end{proof}

For multisets supported on sets with more than four elements there are
no longer simple explicit formulas for the number of equivalence
classes. However, we can still say something about the multisets
$\{1^k, 2^k, \dots, n^k\}$ for $k$ fixed or $n$ fixed. The simplest
situation is when $n$ is fixed.

\begin{prop} \label{prop:kn}
Let $g(n,k)$ be the number of equivalence classes of permutations of
the multiset $\{1^k,\dots,n^k\}$. For fixed $n$, $g(n,k)$ is a
$P$-recursive function of $k$, i.e., for some integer $d\geq 1$ and
polynomials $P_0(k),\dots,P_d(k)$ (depending on $n$), we have
  $$ P_0(k)g(n,k+d)+P_1(k)g(n,k+d-1)+\cdots+P_d(k)g(n,k)=0 $$
for all $k\geq 0$. 
\end{prop}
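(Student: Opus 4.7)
The plan is to realize $g(n,k)$ as a main diagonal coefficient of a rational power series in $n$ variables and then appeal to general theorems on diagonals of D-finite series.

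By the Cartier--Foata formula recorded just before Theorem~\ref{thm:k4}, applied to the multiset $\{1^k,2^k,\dots,n^k\}$, we have
\[
g(n,k) \;=\; [x_1^k x_2^k \cdots x_n^k]\, F_n(x), \qquad
F_n(x) \;=\; \frac{1}{1 - \sum_{i=1}^{n} x_i + \sum_{i=1}^{n-1} x_i x_{i+1}}.
\]
The denominator has constant term $1$, so $F_n(x)$ expands as a formal power series in $x_1,\dots,x_n$ with nonnegative integer coefficients. Set
\[
G_n(t) \;=\; \sum_{k\ge 0} g(n,k)\, t^k,
\]
so $G_n(t)$ is the main diagonal of $F_n(x)$.

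The next step is to invoke Lipshitz's theorem that the main diagonal of a D-finite multivariate power series is again D-finite. Since the rational series $F_n(x)$ is (trivially) D-finite in $n$ variables, $G_n(t)$ is D-finite as a univariate series, i.e.\ satisfies a nontrivial linear ODE with polynomial coefficients in $t$. By the standard dictionary between D-finite univariate power series and P-recursive coefficient sequences (see, e.g.\ Stanley, \emph{Enumerative Combinatorics} Vol.~2, \S6.4), the sequence $k \mapsto g(n,k)$ satisfies a recurrence of exactly the shape asserted in the proposition.

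There is no serious obstacle here: once the diagonal presentation is written down, the conclusion follows immediately from general theory. The only point worth flagging is that the order $d$ and the degrees of the polynomials $P_i$ necessarily grow with $n$, so the statement is genuinely an $n$-by-$n$ assertion and not a uniform one; however, for any prescribed value of $n$, an explicit recurrence can in principle be produced from $F_n(x)$ by creative-telescoping algorithms (Chyzak, Zeilberger, et al.), giving an effective version of the result.
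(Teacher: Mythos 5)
Your proposal is correct and follows exactly the paper's own argument: express $g(n,k)$ via the Cartier--Foata generating function, take the main diagonal, apply Lipshitz's theorem that diagonals of D-finite (in particular rational) series are D-finite, and convert to a P-recurrence for the coefficients. The extra remarks on effectivity via creative telescoping are a nice bonus but not needed.
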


\begin{proof}
The proof is an immediate consequence of equation~\eqref{eq:cf}, the
result of Lipshitz \cite{lip} that the diagonal of a rational
function (or even a $D$-finite function) is $D$-finite, and the
elementary result 
\cite[Thm.~1.5]{rs:df}\cite[Prop.~6.4.3]{ec2} that the coefficients of
$D$-finite series are $P$-recursive. 
\end{proof}

To deal with permutations of the multiset $\{1^k,\dots,n^k\}$ when $k$
is fixed, let $\fra$ denote the field of fractional Laurent series
$f(x)$ over $\cc$ with finitely many terms having a negative exponent,
i.e., for some $j_0\in\zz$ and some $N\geq 1$ we have $f(x) =
\sum_{j\geq j_0} a_jx^{j/N}$, $a_j\in\cc$. A series $y\in \fra$ is
\emph{algebraic} if it satisfies a nontrivial polynomial equation
whose coefficients are polynomials in $x$. Any such polynomial
equation of degree $n$ has $n$ zeros (including multiplicity)
belonging to the field $\fra$. In fact, this field is algebraically
closed (Puiseux' theorem). For further information, see for instance
\cite[{\S}6.1]{ec2}. Write $\afra$ for the field of algebraic
fractional (Laurent) series.  The next result is a direct
generalization of Theorem~\ref{thm:fn}. The main point is that the
series $z_i(x)$ and $y_j(x)$ are \emph{algebraic}.

\begin{thm} \label{thm:umbral}
Let $k$ be fixed. Then there exist finitely many
algebraic fractional series 
$y_1,\dots,y_q$, $z_1,\dots, z_q \in \afra$  
and polynomials $P_1,\dots,P_q\in\afra[m]$ (i.e.,
polynomials in $m$ whose coefficients lie in $\afra$) such that 
  $$ \sum_{n\geq 0}g(n,k)x^n =\sum_{j=1}^q z_j(x)\sum_{m\geq 0}
       m!P_j(m)y_j(x)^m. $$  
\end{thm}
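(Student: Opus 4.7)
The plan is to combine the Cartier--Foata generating function with a transfer-matrix argument in the direction of $n$ and a partial fraction decomposition in an auxiliary variable. First, by the Cartier--Foata formula recalled in the Note following Theorem~\ref{thm:fn},
$$g(n, k) = [t_1^k \cdots t_n^k] \sum_{m \geq 0} \Bigl( \sum_{i=1}^n t_i - \sum_{i=1}^{n-1} t_i t_{i+1} \Bigr)^m.$$
Expanding the multinomial, contributions to this coefficient are indexed by nonnegative integers $v_1, \dots, v_n$ (``single'' counts) and $e_1, \dots, e_{n-1}$ (``edge'' counts) with $v_i + e_{i-1} + e_i = k$ for every $i$ (using the boundary convention $e_0 = e_n := 0$), each carrying weight $(-1)^{\sum e_i} m! / (\prod v_i! \prod e_i!)$, where $m = \sum v_i + \sum e_i$.

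Next, to separate the non-local factor $m!$ from the local structure, I would introduce an auxiliary variable $y$ tracking $m$ and define the $(k+1) \times (k+1)$ transfer matrix $T(k, y)$ with entries $T_{a, b} = (-1)^b y^{k-a}/\bigl(b!\,(k-a-b)!\bigr)$ if $a + b \leq k$ and $T_{a, b} = 0$ otherwise. A direct computation (summing over lattice paths of length $n$ from state $0$ to state $0$) shows that
$$R_k(x, y) := \bigl((I - x T(k, y))^{-1}\bigr)_{0, 0} = \sum_{n, m \geq 0} r_{n, m}(k)\, x^n y^m$$
is a rational function of $x$ and $y$ with $g(n, k) = \sum_m m!\, r_{n, m}(k)$ for every $n \geq 0$; equivalently, applying the formal umbral substitution $y^m \mapsto m!$ term by term in the $x$-coefficients of $R_k$ yields $\sum_n g(n, k) x^n$. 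As a sanity check, $k = 1$ gives $R_1(x, y) = 1/(1 - xy + x^2 y)$, whose umbral image $\sum_m m!\,(x - x^2)^m$ recovers Theorem~\ref{thm:fn}.

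The remaining step is a partial fraction decomposition of $R_k(x, y)$ as a rational function of $y$. Since $xT(k, y)$ has polynomial entries in $y$ of degree at most $k$, a degree comparison shows that $R_k$ has no polynomial part in $y$, so
$$R_k(x, y) = \sum_{j, \ell} \frac{A_{j, \ell}(x)}{(y - \beta_j(x))^\ell},$$
where the $\beta_j(x)$ are the distinct roots in $y$ of $\det(I - x T(k, y))$, $\ell$ runs up to the multiplicity of $\beta_j$, and the $A_{j, \ell}(x)$ are the usual partial fraction coefficients. Since $\det(I - xT)|_{x = 0} = 1$, each $\beta_j(x) \to \infty$ as $x \to 0$; Puiseux' theorem then ensures that $y_j(x) := 1/\beta_j(x) \in \afra$ has positive valuation and that $A_{j, \ell}(x) \in \afra$. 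Expanding each $(y - \beta_j)^{-\ell}$ as a geometric series in $y$ and applying the umbral map yields
$$\sum_{n \geq 0} g(n, k)\, x^n = \sum_{j, \ell} \frac{(-1)^\ell A_{j, \ell}(x)}{\beta_j(x)^\ell} \sum_{m \geq 0} m! \binom{m + \ell - 1}{\ell - 1} y_j(x)^m,$$
which has exactly the form asserted in the theorem, with $P_{j, \ell}(m) = \binom{m + \ell - 1}{\ell - 1}$.

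The hard part will be justifying that this term-by-term umbral substitution is legitimate, since the individual summands on the right lie in $\afra[[y]]$ and involve Puiseux expansions with possibly fractional or negative powers of $x$ that must cancel to produce the integer-coefficient series on the left. I expect this reduces cleanly to a coefficient-by-coefficient verification: for each fixed $n$, only finitely many $r_{n, m}(k)$ are nonzero (bounded by $m \leq nk$), so the identity $\sum_m m!\, r_{n, m}(k) = g(n, k)$ is a finite sum in $\cc$, and agreement of both sides as formal power series in $x$ follows by matching $x^n$-coefficients.
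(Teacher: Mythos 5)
Your proposal is correct and follows essentially the same route as the paper: replace the non-local factor $m!$ by an auxiliary variable (your $y$, the paper's umbral $t$), establish rationality of the resulting two-variable generating function by a transfer-matrix argument, and then apply Puiseux's theorem and a partial fraction expansion before deumbralizing via $y^m\mapsto m!$ (checking, as you do, that the $y_j$ have positive valuation so the substitution is legitimate). The only difference is cosmetic: the paper computes the generating function $F(x,t)$ for connected components and takes $1/(1-F)$, whereas you write down the full $(k+1)\times(k+1)$ transfer matrix directly, which also avoids the paper's special case of the two-vertex component with $k$ parallel edges.
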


\begin{proof}
Our proof will involve ``umbral'' methods. By the work of Rota et al.\
\cite{roman}\cite{rota}, this means that we will be dealing with
polynomials in $t$ (whose coefficients will be fractional Laurent
series in $x$) and will apply a linear functional $\varphi\colon
\cc[t]\{\{x\}\} \to \fra$. For our situation $\varphi$ is defined by
$\varphi(t^m)=m!$.

\textsc{Note.} The ring $\cc[t]\{\{x\}\}$ consists of all series of
the form $\sum_{j\geq j_0}a_j(t)x^{j/N}$ for some $j_0\in\zz$ and
$N\geq 1$, where $a_j(t)\in\cc[t]$. When we replace $t^m$ with $m!$,
the coefficient of each $x^{j/N}$ is a well-defined complex
number. The function $\varphi$ is not merely linear; it commutes with 
\emph{infinite} linear combinations of the form  $\sum_{j\geq
  j_0}a_j(t)x^{j/N}$. In other
words, $\varphi$ is \emph{continuous} is the standard topology on
$\cc[t]\{\{x\}\}$ defined by $f_n(x,t)\to 0$ if $\deg_tf_n(x,t)\to
\infty$ as $n\to\infty$. See \cite[p.~7]{ec1}.

By equation~\eqref{eq:cf}, we have
  \beas g(n,k) & = & [x_1^k\cdots x_n^k]\frac{1}{1-\sum x_i+\sum
      x_ix_{i+1}}\\ & = & [x_1^k\cdots x_n^k]
      \sum_{r\geq 0}\left( \sum x_i-\sum x_ix_{i+1}\right)^r. \eeas 
We obtain a term $\tau$ in the expansion of $\left( \sum x_i-\sum
  x_ix_{i+1}\right)^r$ by picking a term $x_i$ or $-x_ix_{i+1}$ from
each factor. Associate with $\tau$ the graph $G_\tau$ on the vertex
set $[n]$ where we 
put a loop at $i$ every time we choose the term $x_i$, and we put an
edge between $i$ and $i+1$ whenever we choose the term
$-x_ix_{i+1}$. Thus $G_\tau$ is regular of degree $k$ with $r$ edges,
and each connected component has a vertex set which is an interval
$\{a,a+1,\dots, a+b\}$. Let $\mu(i)$ be the number of loops at vertex
  $i$ and $\mu(i,i+1)$ the number of edges between vertices $i$ and
  $i+1$. Let $\nu=\sum \mu(i,i+1)$, the total number of nonloop
  edges. Then 
  \beq [\tau] \left( \sum x_i-\sum x_ix_{i+1}\right)^r =
    \frac{(-1)^\nu r!}{\prod_{i=1}^r \mu_i!\cdot \prod_{i=1}^{r-1}
     \mu(i,i+1)!}. \label{eq:tau} \eeq
Define the \emph{umbralized weight} $w(G)$ of $G=G_\tau$ by
  $$ w(G) = \frac{(-1)^\nu t^r}{\prod_{i=1}^r \mu_i!\cdot
    \prod_{i=1}^{r-1} \mu(i,i+1)!}. $$
Thus $w(G)$ is just the right-hand side of equation~\eqref{eq:tau}
with the numerator factor $r!$ replaced by $t^r$. At the end of the
proof we will ``deumbralize'' by applying the functional $\varphi$,
thus replacing $t^r$ with $r!$. 

Regarding $k$ as fixed, let 
  \beq c(m) = \sum_H w(H), \label{eq:cm} \eeq
summed over all \emph{connected graphs} $H$ on a linearly ordered
$m$-element vertex set, say $[m]$, that are regular of degree $k$ and
such that every edge is either a loop or is between two consecutive
vertices $i$ and $i+1$. It is easy to see by transfer-matrix arguments
(as discussed in \cite[{\S}4.7]{ec1}) that 
  $$  F(x,t):=\sum_{m\geq 1}c(m)x^m $$ 
is a rational function of $x$ whose coefficients are integer
polynomials in $t$. The point is that we can build up $H$ one vertex
at a time in the order $1,2,\dots,m$, and the information we need to
see what new edges are allowed at vertex $i$ (that is, loops at $i$ and
edges between $i$ and $i+1$) depends only on a \emph{bounded} amount
of prior information (in fact, the edges at $i-1$). Moreover, the
contribution to the umbralized weight $w(G)$ from adjoining vertex $i$
and its incident edges is simply the weight obtained thus far
multiplied by $(-1)^{\mu(i,i+1)} (t/2)^{\mu(i)+\mu(i,i+1)}$. Hence we
are counting weighted walks 
on a certain edge-weighted graph with vertex set $[k-1]$ (the possible
values of $\mu(i,i+1)$) with certain initial conditions. We have to
add a term for one exceptional graph: the graph with two vertices and
$k$ edges between them. This extra term does not affect rationality.

We may describe the vertex sets of the connected components of
$G_\tau$ by a composition $(\alpha_1,\dots,\alpha_s)$ of $n$, i.e., a
sequence of positive integers summing to $n$. Thus the vertex sets are
 $$ \{1,\dots,\alpha_1\}, \{\alpha_1+1,\dots,\alpha_1+\alpha_2\},
    \dots, \{\alpha_1+\cdots+\alpha_{s-1}+1,\dots,n\}. $$
Now set
  $$ f(n) = \sum_G w(G), $$
summed over \emph{all} graphs $G$ on a linearly ordered
$n$-element vertex set, say $[n]$, that are regular of degree $k$ and
such that every edge is either a loop or is between two consecutive
vertices $i$ and $i+1$. Thus
  $$ f(n)=\sum_{(\alpha_1,\dots,\alpha_r)}g(\alpha_1)\cdots
     g(\alpha_s), $$ 
where the sum ranges over all compositions of $n$. Note the crucial
fact that if $G$ has $r$ edges and the connected components have
$r_1,\dots,r_s$ edges, then $t^r=t^{r_1+\cdots+r_s}$. It therefore
follows that if 
  $$ G(x,t) = \sum_{n\geq 0}f(n)x^n, $$
then
  $$ G(x,t)=\frac{1}{1-F(x,t)}. $$
It follows from equation~\eqref{eq:tau} that
  \beq \sum_{n\geq 0} g(n,k)x^n = \varphi G(x,t), \label{eq:gxtthm}
  \eeq
where $\varphi$ is the linear functional mentioned above which is
defined by $\varphi(t^r)=r!$. 

By Puiseux' theorem we can write
  $$ G(x,t) = \frac{1}{(1-y_1t)^{d_1}\cdots (1-y_qt)^{d_q}} $$
for certain distinct algebraic fractional Laurent series $y_i\in\afra$ 
and integers $d_j\geq 1$. By partial fractions we have
  \bea G(x,t) & = & \sum_{j=1}^q\sum_{i=1}^{d_j} 
          \frac{u_{ij}}{(1-y_jt)^i} \nonumber \\ & = &
      \sum_{j=1}^q\sum_{i=1}^{d_j}u_{ij}
       \sum_{m\geq 0}\binom{i+m-1}{i} 
        y_j^mt^m \nonumber \\ & = & \sum_{j=1}^q \sum_{m\geq 0}
      \sum_{j=1}^q \sum_{m\geq 0} P_j(m)z_jy_j^m t^m, 
        \label{eq:gxt} \eea
where $u_{ij},z_i\in\afra$ and $P_j(m)\in\afra[m]$. Apply the
functional  $\varphi$ to complete the proof. 
\end{proof}


\begin{example}
Consider the case $k=2$. Let $G$ be a connected regular graph
of degree 2 with vertex set $[m]$ and edges that are either loops or
are between vertices $i$ and $i+1$ for some $1\leq i\leq m-1$. Then
$G$ is either a path with vertices $1,2,\dots, m$ (in that order) with
a loop at both ends (allowing a double loop when $m=1$), or a double
edge when $m=2$. Hence
   \beas F(x,t) & = & \frac 12 t^2x+(\frac 12
   t^2-t^3)x^2+t^4x^3-t^5x^4+\cdots\\ & = & \frac 12t^2x -
   (t^3-\frac 12 t^2)x^2+\frac{t^4x^3}{1+tx}, \eeas
and
 \beas G(x,t) & = & \frac{1}{1-F(x,t)}\\ & = &\frac{1+tx}
  {1+xt-\frac 12(x+x^2)t^2+\frac 12(x^2-x^3)t^3}. \eeas
  The denominator of $G(x,t)$ factors as $(1-y_1t)(1-y_2t)(1-y_3t)$,
  where
  \beas y_1 & = & x+2x^2+18x^3+194x^4+2338x^5+30274x^6+411698x^7\\ 
     & & \qquad + 5800066x^8+\cdots\\[1em]
    y_2 & = & \frac 12\sqrt{2}x^{1/2}-x-\frac 14\sqrt{2}x^{3/2}-x^2-
     \frac{33}{16}\sqrt{2}x^{5/2}-9x^3\\ & & \qquad
     -\frac{657}{32}\sqrt{2} x^{7/2}-97x^4-\cdots\\[1em] y_3 & = &
    -\frac 12\sqrt{2}x^{1/2}-x+\frac 14\sqrt{2}x^{3/2}-x^2+
     \frac{33}{16}\sqrt{2}x^{5/2}-9x^3\\ & & \qquad
       +\frac{657}{32}\sqrt{2}x^{7/2}-97x^4+\cdots. \eeas
Since the $y_i$'s are distinct we can take each $P_i(m)=1$.
The coefficients $z_1,z_2,z_3$ are given by
   \beas z_1 & = & 
   -4x-48x^2-676x^3-10176x^4-158564x^5-2523696x^6+\cdots\\[1em] 
     z_2 & = & \frac 12+\frac 12\sqrt{2}x^{1/2}+2x+\frac{19}{4}
     \sqrt{2}x^{3/2}+24x^4+\frac{1007}{16}\sqrt{2}x^{5/2}+
     338x^3\\ & & \qquad +\frac{29507}{32}\sqrt{2}x^{7/2}+
      \cdots\\[1em] 
     z_3 & = & \frac 12-\frac 12\sqrt{2}x^{1/2}+2x-\frac{19}{4}
     \sqrt{2}x^{3/2}+24x^4-\frac{1007}{16}\sqrt{2}x^{5/2}+
     338x^3\\ & & \qquad -\frac{29507}{32}\sqrt{2}x^{7/2}
       +\cdots. \eeas
Finally we obtain
  $$ \sum_{n\geq 0}g(n,2)x^n = \sum_{m\geq 0}m!(z_1y_1^m +
      z_2y_2^m + z_3y_3^m). $$ 
\end{example}


In Theorem~\ref{thm:fn} we determined the number of equivalence
classes of the equivalence relation $\sim$ on $\sn$. Let us now turn
to the structure of the individual equivalence classes. Given
$w\in\sn$, write $\langle w\rangle$ for the class containing
$w$. First we consider the case where $w$ is the identity permutation
id$_n=12\cdots n$ or its reverse $\overline{\mathrm{id}}_n=n\cdots
21$. (Clearly for any $w$ and its reverse $\bar{w}$ we have $\#\langle
w\rangle =\#\langle \bar{w}\rangle$.) Let $F_n$ denote the $n$th
Fibonacci number, i.e., $F_1=F_2=1$, $F_n=F_{n-1}+F_{n-2}$. 

\begin{prop} \label{prop:idclass}
We have $\#\langle\mathrm{id}_n\rangle=
\#\langle\overline{\mathrm{id}}_n\rangle = F_{n+1}$.
\end{prop}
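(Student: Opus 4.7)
The plan is to prove by induction on $n$ that $L_n := \#\langle \mathrm{id}_n\rangle$ satisfies the Fibonacci recurrence $L_n = L_{n-1} + L_{n-2}$ for $n \geq 3$, with $L_1 = 1$ and $L_2 = 2$; together with $F_2 = 1$ and $F_3 = 2$ this will give $L_n = F_{n+1}$. The equality $\#\langle \overline{\mathrm{id}}_n\rangle = F_{n+1}$ will then follow at once from the observation (already noted in the paragraph before the proposition) that the reversal map $w \mapsto \bar w$ preserves $\sim$ and carries $\mathrm{id}_n$ to $\overline{\mathrm{id}}_n$, so the two classes have the same cardinality.

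The key intermediate result is a \emph{position lemma}: for every $w = a_1 a_2 \cdots a_n \in \langle \mathrm{id}_n\rangle$, either $a_n = n$, or else $a_{n-1} = n$ and $a_n = n - 1$. The argument tracks the motion of the largest entry $n$: since the only entry differing from $n$ by one is $n-1$, the only permissible swap that moves $n$ is the $(n-1,n)$-transposition at positions $(n-1,n)$. As soon as such a swap has moved $n$ to position $n-1$, the entry at position $n$ is $n-1$ and the entry at position $n-2$ lies in $\{1,\ldots,n-2\}$; hence the entry at position $n-2$ differs from $n$ by at least $2$, forbidding any swap at positions $(n-2,n-1)$. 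Therefore the only legal move touching position $n-1$ or $n$ is the $(n-1,n)$-transposition moving $n$ back, and $n$ never advances past position $n-1$. An important byproduct is that in any swap sequence realizing $\mathrm{id}_n \sim w$, each $(n-1,n)$-transposition commutes with every swap occurring between it and the next such transposition, since those intermediate swaps all lie at positions $(i,i+1)$ with $i \leq n-3$.

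With the lemma established, the recurrence follows by case-splitting on the position of $n$. If $a_n = n$, the $(n-1,n)$-transpositions appear an even number of times and, by the commutativity just noted, can be paired and cancelled in the swap sequence; what remains is a legal swap sequence on the first $n-1$ entries from $12 \cdots (n-1)$ to $a_1 \cdots a_{n-1}$, so $a_1 \cdots a_{n-1}$ runs over $\langle \mathrm{id}_{n-1}\rangle$ (and every such sequence extends to a valid $w$), contributing $L_{n-1}$ permutations. If $a_{n-1} = n$ and $a_n = n - 1$, the same pairing leaves a single unpaired $(n-1,n)$-transposition; the state just before it has its first $n-1$ entries ending in $n-1$ and belonging to $\langle \mathrm{id}_{n-1}\rangle$, which by the previous case forces its first $n-2$ entries to lie in $\langle \mathrm{id}_{n-2}\rangle$, contributing $L_{n-2}$. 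Summing gives $L_n = L_{n-1} + L_{n-2}$. The main obstacle is the commuting/cancellation bookkeeping in the position lemma and its corollary; everything else reduces to routine induction.
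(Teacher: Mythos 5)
Your proof is correct and follows essentially the same route as the paper: the paper's argument is exactly the Fibonacci recurrence obtained by observing that any $w\sim\mathrm{id}_n$ has either $a_n=n$ (giving $\#\langle\mathrm{id}_{n-1}\rangle$ choices) or $a_{n-1}=n$, $a_n=n-1$ (giving $\#\langle\mathrm{id}_{n-2}\rangle$ choices). Your ``position lemma'' and the commuting/cancellation bookkeeping simply supply the details that the paper leaves implicit.
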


\begin{proof}
Let $g(n)=\#\langle\mathrm{id}_n\rangle=
\#\langle\overline{\mathrm{id}}_n\rangle$, so $g(1)=1=F_2$ and
$g(2)=2=F_3$. If $w=a_1a_2\cdots a_n\sim\mathrm{id}$, then either
$a_n=n$ with $g(n-1)$ possibilities for $a_1a_2\cdots a_{n-1}$, or else
$a_{n-1}=n$ and $a_n={n-1}$ with $g(n-2)$ possibilities for
$a_1a_2\cdots a_{n-2}$. Hence $g(n)=g(n-1)+g(n-2)$, and the proof
follows.
\end{proof}

We now consider an arbitrary equivalence
class. Proposition~\ref{prop:fibprod} below is due to Joel Lewis
(private communication).

\begin{lemma} \label{lemma:fibprod}
Each equivalence class $\langle w\rangle$ of permutations of any
finite subset $S$ of $\{1,2,\dots\}$ contains a permutation
$v=v_1v_2 \cdots v_k$ (concatenation of words) such that (a) each
$v_i$ is an increasing or decreasing sequence of consecutive integers,
and (b) every $u\sim w$ has the form $u=v'_1 v'_2\cdots v'_k$, where
$v'_i\sim v_i$. Moreover, the permutation $v$ is unique up to
reversing $v_i$'s of length two.
\end{lemma}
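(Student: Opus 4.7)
The plan is to extract an intrinsic ``block structure'' of the equivalence class $\langle w\rangle$, from which the factorization $v=v_1\cdots v_k$ falls out, and then to prove that within each block one can choose a monotonic consecutive representative.

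Call $p\in\{1,\dots,n-1\}$ a \emph{cut position} of $\langle w\rangle$ if $\{u_1,\dots,u_p\}$ is the same set for every $u\sim w$. Let $0=p_0<p_1<\cdots<p_k=n$ enumerate the cut positions together with $0$ and $n$, and set $B_i=\{p_{i-1}+1,\dots,p_i\}$, $V_i=\{w_j:j\in B_i\}$. Any $\sim$-swap must take place inside a single block, since a swap at $(p,p+1)$ for $p$ a cut position would alter the set $\{u_1,\dots,u_p\}$. Hence $\langle w\rangle$ factors as the concatenation product of the restricted classes $\langle w|_{B_i}\rangle$ on the value sets $V_i$, and by maximality of the cuts each of these is \emph{irreducible} in the sense of having no interior cut. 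It therefore suffices to prove, for an irreducible class $C$ on $V\subseteq\{1,2,\dots\}$ with $|C|\geq 2$, that $V$ is a contiguous interval $\{m,m+1,\dots,m+\ell-1\}$ and that $C$ equals either $\langle m,m+1,\dots,m+\ell-1\rangle$ or $\langle m+\ell-1,\dots,m+1,m\rangle$.

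For the first point, suppose $V=V^-\sqcup V^+$ with $\max V^-+1<\min V^+$; then $V^-$ and $V^+$ values never swap, so the positions of $V^-$ values are a fixed subset $P^-\subseteq[n]$ in every $u\in C$. Taking either the right endpoint of the leftmost maximal ``$V^-$-run'' of positions, or the corresponding $V^+$-endpoint when position $1$ lies in $P^+$, exhibits a non-trivial cut, contradicting irreducibility. For the second point, argue that the position of $m=\min V$ cannot be fixed across $C$ (else the word splits into non-interacting halves around $m$, producing cuts at the positions adjacent to $m$), and $m$ can only reach new positions by adjacent swaps with $m+1$. An induction on $\ell$, tracking how $m$ and $m+1$ propagate under swaps and using the inductive hypothesis on the smaller class obtained by deleting $m$, shows $m$ occupies exactly $\{1,2\}$ or $\{\ell-1,\ell\}$, and in each case $C$ coincides with $\langle m,\dots,m+\ell-1\rangle$ or its reverse.

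Uniqueness of $v$ up to reversal of length-$2$ blocks follows because the block positions are intrinsic to the class, and Proposition~\ref{prop:idclass} together with a short direct recursion shows that the Fibonacci class $\langle m,\dots,m+\ell-1\rangle$ of size $F_{\ell+1}$ does not contain the reverse sequence when $\ell\geq 3$. This pins down the direction of each block of length at least three, while the two orderings $(a,a+1)$ and $(a+1,a)$ are $\sim$-equivalent, accounting for the length-two ambiguity in the statement. The main obstacle is the irreducible characterization above: even in simple cases the salient representative from Lemma~\ref{lemma1} fails to be monotonic (e.g.\ the salient element of $\langle 3412\rangle$ is $3412$ itself, while the required monotonic representative is $4321$), so the monotonic element cannot simply be read off from the salient normal form, and the argument must instead track the dynamics of the extremal elements $m$ and $m+\ell-1$ directly through the allowed swaps.
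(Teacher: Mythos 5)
Your argument is correct in outline but takes a genuinely different route from the paper. The paper proceeds greedily: it takes the largest $j$ such that some representative of the class begins with a monotone run of $j$ consecutive integers, observes that no later entry can ever be transposed into that prefix and that the prefix cannot be reversed unless $j\leq 2$, and then inducts on the remainder. You instead extract the block structure intrinsically, via the ``cut positions'' $p$ at which the value set $\{u_1,\dots,u_p\}$ is constant over the class; since a swap at $(p,p+1)$ would change that set, the class factors exactly as a concatenation product over the resulting blocks, and the problem reduces to classifying irreducible classes. This is an attractive reformulation: it makes the uniqueness of the block boundaries immediate (any decomposition satisfying (a) and (b) has boundaries that are cuts, and a monotone consecutive block of length $\geq 2$ has no interior cut), whereas the paper's greedy prefix requires a separate check that it is canonical. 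Your contiguity argument for the value set of an irreducible block (fixed position set $P^-$ for the low values, hence a cut at the right end of the leftmost $P^-$-run) is sound. The one place that still carries the real combinatorial weight is the classification of irreducible classes as $\langle m,m+1,\dots,m+\ell-1\rangle$ or its reverse: the sentence ``an induction on $\ell$ \dots shows'' is a statement of the desired conclusion rather than a proof, and the proposed induction via deleting $m$ has genuine subtleties (the projected class need not be irreducible, and not every swap of the projected class lifts when $m$ sits between the two values being exchanged), so this step needs to be written out before the argument is complete. That said, the paper's own proof leaves its key step at a comparable ``it is easy to see'' level of detail, and your observation that the salient normal form of Lemma~\ref{lemma1} (e.g.\ $3412$) need not be the monotone representative correctly identifies why neither proof can simply read the answer off the canonical form of Section~2.
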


\begin{proof}
Let $j$ be the largest integer for which some $v\sim w$ has the
property that $v_1 v_2\cdots v_j$ is either an increasing or
decreasing sequence of consecutive integers. It is easy to see that
$v_k$ for $k>j$ can never be interchanged with some $v_i$ for $1\leq
i\leq j$ in a sequence of transpositions of adjacent consecutive
integers. Moreover, $v_1 v_2\cdots v_j$ cannot be converted to the
reverse $v_j \cdots v_2 v_1$ unless $j\leq 2$. The result follows by
induction. 
\end{proof}

\begin{prop} \label{prop:fibprod}
Let $m_i$ be the length of $v_i$ in Lemma~\ref{lemma:fibprod}. Then
  $$ \#\langle w\rangle = F_{m_1+1}\cdots F_{m_k+1}. $$
\end{prop}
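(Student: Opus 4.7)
My plan is to use Lemma~\ref{lemma:fibprod} to decompose $\langle w\rangle$ as a Cartesian product of the equivalence classes of the individual blocks, and then to evaluate each factor via Proposition~\ref{prop:idclass}. Concretely, I would exhibit a bijection
\[ \Phi\colon \langle w\rangle \;\longrightarrow\; \langle v_1\rangle\times\cdots\times\langle v_k\rangle, \]
where $\langle v_i\rangle$ denotes the equivalence class of $v_i$ under $\sim$ among all permutations of the (consecutive) integers appearing in $v_i$. Because the block lengths $m_1,\ldots,m_k$ partition the position set $[n]$ into fixed slots, Lemma~\ref{lemma:fibprod} supplies the forward map: every $u\sim w$ has a unique concatenation decomposition $u=u_1u_2\cdots u_k$ with $u_i\sim v_i$.

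The main thing to verify is that $\Phi$ is surjective, i.e., that for any choice of $u_i\sim v_i$ the concatenation $u_1u_2\cdots u_k$ lies in $\langle w\rangle$. This is a consequence of the local nature of the allowed moves: any adjacent transposition of consecutive integers that converts $v_i$ into $u_i$ remains a legal move inside the larger word $v_1\cdots v_{i-1}v_iv_{i+1}\cdots v_k$, since it touches only two positions and values within block $i$ and leaves everything outside the block unaffected. Performing the requisite sequences of moves for $i=1,\dots,k$ in turn transforms $w$ into $u_1u_2\cdots u_k$. Injectivity of $\Phi$ is clear since the block slots are fixed, so
\[ \#\langle w\rangle \;=\; \prod_{i=1}^k \#\langle v_i\rangle. \]

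To complete the count I compute each factor. The relation $\sim$ is invariant under adding a common constant to all entries of a permutation, since such a shift neither creates nor destroys adjacent pairs differing by exactly one. Each $v_i$ is an increasing or decreasing run of $m_i$ consecutive integers, so its class has the same cardinality as $\langle\mathrm{id}_{m_i}\rangle$ or $\langle\overline{\mathrm{id}}_{m_i}\rangle$; by Proposition~\ref{prop:idclass} this cardinality is $F_{m_i+1}$. Multiplying over $i$ gives the claimed formula. The only subtle point is the uniqueness caveat in Lemma~\ref{lemma:fibprod} — namely that length-two blocks may be reversed — but this is harmless, because $\langle v_i\rangle$ with $m_i=2$ contains exactly $F_3=2$ elements regardless of which of the two reversals one designates as $v_i$, so the product $\prod F_{m_i+1}$ does not depend on the choice of canonical decomposition.
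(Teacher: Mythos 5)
Your proof is correct and follows exactly the route the paper intends: the paper's own proof consists of the single line ``Immediate from Proposition~\ref{prop:idclass} and Lemma~\ref{lemma:fibprod},'' and your argument simply fills in the details of that deduction (the product decomposition from the lemma, the locality of moves for surjectivity, and the shift-invariance reducing each block to $\langle\mathrm{id}_{m_i}\rangle$). Your attention to the length-two reversal caveat is a nice touch but, as you note, harmless.
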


\begin{proof}
Immediate from Proposition~\ref{prop:idclass} and
Lemma~\ref{lemma:fibprod}. 
\end{proof}

We can also ask for the number of equivalence classes of a given size
$r$. Here we consider $r=1$. Let $N(n)$ denote the number of
one-element equivalence classes of permutations in $\sn$. Thus $N(n)$
is also the number of permutations $a_1a_2\cdots a_n\in\sn$ for which
$|a_i-a_{i+1}| \geq 2$ for $1\leq i\leq n-1$. This problem is
discussed in OEIS \cite[A002464]{oeis}. In particular, we have the
generating function
  \beas \sum_{n\geq 0}N(n)x^n & = & \sum_{m\geq 0} m!\left(
    \frac{x(1-x)}{1+x}\right )^m\\ & = & 1+x+2x^4+14x^5+90x^6+646x^7+
     5242x^8+\cdots. \eeas

\section{Multiplicity-free flag $h$-vectors of distributive lattices.}
Let $P$ be a finite graded poset of rank $n$ with $\hat{0}$ and
$\hat{1}$, and let $\rho$ be the rank function of $P$. (Unexplained
poset terminology may be found in \cite[Ch.~3]{ec1}.) Write $2^{[n-1]}$
for the set of all subsets of $[n-1]$. The \emph{flag
  $f$-vector} of $P$ is the function $\alpha_P\colon 2^{[n-1]}\to
  \zz$ defined as follows: if $S\subseteq [n-1]$, then $\alpha_P(S)$
  is the number of chains $C$ of $P$ such that $S=\{\rho(t)\colon t\in
  C\}$. For instance, $\alpha_P(\emptyset)=1$, $\alpha_P(i)$ (short
  for $\alpha_P\{i\})$) is the number of elements of $P$ of rank $i$,
  and $\alpha_P([n-1])$ is the number of maximal chains of $P$. Define
  the \emph{flag $h$-vector} $\beta_P\colon 2^{[n-1]} \to \zz$ by
    $$ \beta_P(S) = \sum_{T\subseteq S}(-1)^{\#(S-T)}\alpha_P(T). $$
 Equivalently, 
    $$ \alpha_P(S) = \sum_{T\subseteq S}\beta_P(T). $$
 We say that $\beta_P$ is \emph{multiplicity-free} if $\beta_P(S)=
 0,\pm 1$ for all $S\subseteq [n-1]$. 

 In this section we will classify and enumerate all $P$ for which
 $\beta_P$ is multiplicity-free. First we consider the case when $P$
 is a distributive lattice, so $P=J(Q)$ (the lattice of order ideals
 of $Q$) for some $n$-element poset $Q$ (see
 \cite[Thm.~3.4.1]{ec1}). Suppose that $Q$ is a natural partial
 ordering of $[n]$, i.e., if $i<j$ in $Q$, then $i<j$ in $\zz$. We may
 regard a \emph{linear extension} of $Q$ as a permutation $w=a_1\cdots
 a_n\in\sn$ for which $i$ precedes $j$ in $w$ if $i<j$ in $Q$. Write
 $\mathcal{L}(Q)$ for the set of linear extensions of $Q$, and let
   $$ D(w) = \{ i\colon a_i>a_{i+1}\}\subseteq [n-1], $$
 the \emph{descent set} of $w$. A basic result in
 the theory of $P$-partitions \cite[Thm.~3.13.1]{ec1} asserts that
   $$ \beta_P(S) = \#\{w\in\mathcal{L}(Q)\colon D(w)=S\}. $$
 It will follow from our results that the equivalence class containing
 $12\cdots n$ of the equivalence relation $\sim$ on $\sn$ is the set
 $\mathcal{L}(Q)$ for a certain natural partial ordering $Q$ of $[n]$ for
 which $\beta_{J(Q)}$ is multiplicity-free, and
 that $Q$ has the most number of linear extensions of any $n$-element
 poset for which $\beta_{J(Q)}$ is multiplicity-free.

In general, if we have a partially commutative monoid $M$ with
generators $g_1,\dots, g_n$ and if $w=g_{i_1}g_{i_2}\cdots g_{i_r}\in
M$, then the set of all words in the $g_i$'s that are equal to $w$
correspond to the linear extensions of a poset $Q_w$ with elements
$1,\dots,r$ \cite[Exer.~3.123]{ec1}. Namely, if $1\leq a<b\leq r$ in
$\zz$, then let $a<b$ in $Q_w$ if $g_{i_a}=g_{i_b}$ or if
$g_{i_a}g_{i_b} \neq g_{i_b}g_{i_a}$. In the case $g_i=i$ and
$w=12\cdots n$, then the set of all words equal to $w$ are themselves
the linear extensions of $Q_w$. For instance, if $n=5$, $g_i=i$, and
the commuting relations are $12=21$, $23=32$, $34=43$, and $45=54$,
then the poset $Q_{12345}$ is shown in Figure~\ref{fig:12345}. The
linear extensions are the words equivalent to 12345 under $\sim$,
namely 12345, 12354, 12435, 21345, 21354, 21435, 13245, 13254. Write
$Q_n$ for the poset $Q_{12\cdots n}$. Define a subset $S$ of $\zz$ to
be \emph{sparse} if it does not contain two consecutive integers.

\begin{figure}
 \centerline{\includegraphics[width=3cm]{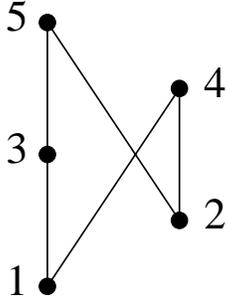}}
\caption{The poset $Q_{12345}$}
\label{fig:12345}
\end{figure}

\begin{lemma} \label{lemma:ds1n}
For each sparse $S\subset [n-1]$, there is exactly one $w\in \ec$ (the
equivalence class containing $12\cdots n$ of the equivalence relation
$\sim$) satisfying $D(w)=S$. Conversely, if $w\in\ec$ then $D(w)$ is
sparse. 
\end{lemma}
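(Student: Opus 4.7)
The plan is to exhibit an explicit bijection between sparse subsets $S\subseteq[n-1]$ and the elements of $\ec$, with the property that the descent set of the image equals $S$; both halves of the lemma then follow immediately. For each sparse $S$ I would define $w_S$ to be the permutation obtained from $12\cdots n$ by transposing the entries in positions $i$ and $i+1$ for every $i\in S$. Since $S$ is sparse these transpositions act on disjoint position-pairs and therefore commute, so $w_S$ is unambiguous.

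The first step is to verify $w_S\in\ec$. Each individual transposition exchanges the consecutive values $i$ and $i+1$ currently sitting in positions $i,i+1$: since $i-1,i+1\notin S$ these values have not been moved by any previous swap, so the exchange is a legal elementary $\sim$-move. Performing the commuting transpositions one by one starting from $12\cdots n$ thus realizes $w_S$ inside the $\sim$-class.

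The second step is a short case analysis of what occupies positions $j$ and $j+1$ in $w_S$, based on which of $j-1,j,j+1$ lie in $S$. Sparsity makes the cases disjoint and yields
\[
(w_S(j),w_S(j+1))=\begin{cases}(j+1,j)&\text{if }j\in S,\\ (j,j+1)&\text{if }\{j-1,j,j+1\}\cap S=\emptyset,\\ \text{a pair differing by }2\text{ or }3&\text{in all other cases.}\end{cases}
\]
Thus $D(w_S)=S$, which simultaneously shows that every descent set arising from $w_S$ is sparse and that $S\mapsto w_S$ is injective.

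The third step is surjectivity: I would show that $\{w_S:S\text{ sparse}\}$ is closed under the generating moves of $\sim$. The case analysis above already enumerates every adjacent pair of consecutive integers in $w_S$: either the swapped pair at position $j\in S$, whose $\sim$-move returns $w_{S\setminus\{j\}}$, or the ``unmarked'' pair at position $j$ with $\{j-1,j,j+1\}\cap S=\emptyset$, whose $\sim$-move produces $w_{S\cup\{j\}}$, still sparse. Hence the $\sim$-class of $w_\emptyset=12\cdots n$ is contained in $\{w_S:S\text{ sparse}\}$, while the reverse containment was established in step~1, so $S\leftrightarrow w_S$ is the desired bijection. The hardest part is just keeping the case-analysis bookkeeping straight; as a sanity check, the number of sparse subsets of $[n-1]$ is $F_{n+1}$, matching $\#\ec$ by Proposition~\ref{prop:idclass}.
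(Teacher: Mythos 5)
Your proof is correct and takes essentially the same approach as the paper: the paper's proof simply asserts that $\ec$ consists of the permutations obtained from $12\cdots n$ by transposing $i$ and $i+1$ for $i$ ranging over a sparse set, and you supply the verification (well-definedness, the descent-set computation, and closure of $\{w_S\}$ under the generating moves) that the paper leaves implicit.
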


\begin{proof}
  The permutations $w\in\ec$ are obtained by taking the identity
  permutation $\mathrm{id}=12\cdots n$, choosing a sparse subset
  $S\subset [n-1]$, and transposing $i$ and $i+1$ in id when $i\in
  S$. The proof follows.
\end{proof}

\begin{prop} \label{prop:maxep}
Let $Q$ be an $n$-element poset for which the flag $h$-vector of
$J(Q)$ is multiplicity-free. Then $e(Q)\leq F_{n+1}$ (a Fibonacci
number). Moreover, the unique such poset (up to isomorphism) for which
equality holds is $Q_n$.
\end{prop}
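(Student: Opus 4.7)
The plan is to reduce the bound to the following \emph{sparseness claim}: every linear extension of a multiplicity-free $Q$ has sparse descent set. Since $\beta_{J(Q)}(S) = \#\{w \in \mathcal{L}(Q) : D(w) = S\} \geq 0$, the multfree hypothesis forces $\beta_{J(Q)}(S) \in \{0,1\}$, so $w \mapsto D(w)$ is injective on $\mathcal{L}(Q)$; combined with the sparseness claim, it embeds $\mathcal{L}(Q)$ into the $F_{n+1}$ sparse subsets of $[n-1]$, giving $e(Q) \leq F_{n+1}$.

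To prove the sparseness claim by contrapositive, suppose $w = a_1 \cdots a_n \in \mathcal{L}(Q)$ has $\{i, i+1\} \subseteq D(w)$, so $a_i > a_{i+1} > a_{i+2}$. A short argument using natural labeling forces the three values $x < y < z$ (with $z = a_i$, $y = a_{i+1}$, $x = a_{i+2}$) to be pairwise incomparable in $Q$: any $Q$-relation among them would contradict their decreasing $\zz$-order at increasing positions. Permuting $\{x, y, z\}$ among positions $(i, i+1, i+2)$ then yields six linear extensions of $Q$, and their local descent patterns at $(i, i+1)$ take only four distinct values: $\{i, i+1\}$ (the original $w$), $\{i\}$ (two permutations $yxz$ and $zxy$), $\{i+1\}$ (two permutations $xzy$ and $yzx$), and $\emptyset$. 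For multfree, each size-two pair must be distinguished globally by the boundary descents at positions $i-1$ and $i+3$.

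A direct calculation identifies the distinguishing conditions: the $\{i+1\}$-pair has coinciding global descent sets unless some element of $\{a_{i-1}, a_{i+3}\}$ lies in the open integer interval $(x, y)$, and the $\{i\}$-pair coincide unless some element lies in $(y, z)$. The multfree contradiction follows immediately in the boundary cases ($i = 1$ or $i + 2 = n$, where only one of $a_{i-1}, a_{i+3}$ exists and so cannot meet both disjoint-interval conditions) and in the consecutive-integer cases ($y = x+1$ or $z = y+1$, where an interval is empty). \textbf{The main obstacle} is the residual case: $i$ interior, both gaps satisfying $y - x \geq 2$ and $z - y \geq 2$, with some $m \in (x, y)$ and $M \in (y, z)$ actually occupying positions $i-1$ and $i+3$. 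Here the six local permutations already yield six distinct descent sets, so any multfree violation must come from other linear extensions of $Q$; I would handle it by observing that the subtriples $\{x, m, y\}$ and $\{y, M, z\}$ are themselves $3$-antichains in $Q$ (by the same natural-labeling argument applied to shifted positions), and iterating the construction---or proceeding by induction on $n$ or on the gap $z - x$---to produce two linear extensions sharing a descent set.

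Granted the sparseness claim, equality $e(Q) = F_{n+1}$ forces every sparse subset of $[n-1]$ to be realized exactly once, so $\beta_{J(Q)}$ coincides with $\beta_{J(Q_n)}$ (which is $1$ on sparse subsets and $0$ otherwise by Lemma~\ref{lemma:ds1n}). From these descent data the abstract Hasse diagram of $Q$ is pinned down: it must coincide with that of $Q_n$ (the fence on $n$ elements), establishing uniqueness of the extremal poset up to isomorphism.
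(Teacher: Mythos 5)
Your overall route is genuinely different from the paper's: the paper first classifies \emph{all} $Q$ with $\beta_{J(Q)}$ multiplicity-free (Theorem~\ref{thm:distlat}), by showing $J(Q)$ has at most two elements of each rank, encoding such lattices by words $\gamma$, and deriving the recurrence~\eqref{eq:betarec}; both the bound $e(Q)\le F_{n+1}$ and the uniqueness of the extremal poset are then read off in Corollary~\ref{cor:maxep}. You instead try to prove directly that every $w\in\mathcal{L}(Q)$ has a sparse descent set and then count. The reduction of the inequality to that sparseness claim is correct (injectivity of $D$ on $\mathcal{L}(Q)$ plus the count $F_{n+1}$ of sparse subsets of $[n-1]$), the observation that $a_i>a_{i+1}>a_{i+2}$ forces $\{x,y,z\}$ to be a $3$-antichain is correct, and so is the analysis of the six local rearrangements and of when the two $\{i\}$-type (resp.\ $\{i+1\}$-type) extensions have equal global descent sets.

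However, there are two genuine gaps. First, the ``residual case'' is not an edge case to be mopped up --- it is where all the work lies --- and the fix you sketch is wrong: if $m=a_{i-1}\in(x,y)$, then $m$ \emph{precedes} $y$ in $w$ and $m<y$ in $\zz$, so nothing prevents $m<_Q y$; the natural-labelling argument forces incomparability only when the positional order and the integer order disagree. Hence $\{x,m,y\}$ (and likewise $\{y,M,z\}$) need not be $3$-antichains, and the proposed iteration has no base to stand on; ``induction on $n$ or on the gap $z-x$'' is not set up in any way that one could check. Some new input is needed here --- the paper's substitute is the structural classification, and its own direct remark on this point works with convex copies of $\bm{1}+\bm{1}+\bm{1}$ and $\bm{2}+\bm{2}$ inside intervals of $J(Q)$ rather than with a single bad linear extension. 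Second, the uniqueness statement is asserted rather than proved: knowing that $D$ is a bijection from $\mathcal{L}(Q)$ onto the sparse subsets of $[n-1]$ gives $\beta_{J(Q)}=\beta_{J(Q_n)}$, but a flag $h$-vector does not in general determine a poset, so ``the Hasse diagram is pinned down'' needs an argument; the paper obtains it from equations~\eqref{eq:betarec} and~\eqref{eq:ordsum} applied to the classified lattices $L(\gamma)$, where $\beta=1$ on every sparse set forces $\gamma=0101\cdots$.
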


We will prove Proposition~\ref{prop:maxep} as a consequence of a
stronger result: the complete classification of all posets $Q$ for which
the flag $h$-vector of $J(Q)$ is multiplicity-free. The key
observation is the following trivial result.

\begin{lemma} \label{lemma:2el}
Let $P$ be any graded poset whose flag $h$-vector $\beta_P$ is
multiplicity-free. Then $P$ has at most two elements of each rank. 
\end{lemma}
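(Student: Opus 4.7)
The proof looks like a one-liner from the definitions, so my plan is mostly to unwind notation.

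The plan is to use the inversion formula $\alpha_P(S) = \sum_{T\subseteq S}\beta_P(T)$ at a single-element set $S=\{i\}$ with $0<i<n$. First I would record that $\beta_P(\emptyset) = \alpha_P(\emptyset) = 1$ (the only chain whose rank set is empty is the empty chain, taken with the convention used in the paper). Then from the inversion formula applied to $S=\{i\}$ one gets
\[
  \alpha_P(\{i\}) \;=\; \beta_P(\emptyset) + \beta_P(\{i\}) \;=\; 1 + \beta_P(\{i\}).
\]
Since $\alpha_P(\{i\})$ counts the elements of $P$ of rank $i$, the claim is precisely the bound $\alpha_P(\{i\})\le 2$.

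Next, assuming $\beta_P$ is multiplicity-free, $\beta_P(\{i\})\in\{-1,0,1\}$, so $\alpha_P(\{i\})\in\{0,1,2\}$, and in particular $\alpha_P(\{i\})\le 2$. For the boundary ranks $i=0$ and $i=n$ there is only the element $\hat 0$, resp.\ $\hat 1$, so $\alpha_P(\{i\})=1\le 2$ trivially. This covers every rank.

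The ``hard'' part is really just the sanity check that the argument goes through at all ranks; there is no serious obstacle. (One could also note in passing that $\alpha_P(\{i\})\ge 1$ for $0\le i\le n$ because $P$ is graded with $\hat 0,\hat 1$, which forces $\beta_P(\{i\})\in\{0,1\}$ rather than $-1$, but this stronger statement is not needed for the lemma as stated.)
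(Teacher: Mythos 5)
Your proof is correct and is essentially identical to the paper's: both reduce the lemma to the identity $\alpha_P(\{i\})=1+\beta_P(\{i\})$ for interior ranks and then invoke $\beta_P(\{i\})\in\{0,\pm 1\}$. No issues.
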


\begin{proof}
Let $P$ have rank $n$ and $1\leq i\leq n-1$. Then $\beta_P(i)
=\alpha_P(i)-1$, i.e., one less than the number of elements of rank
$i$. The proof follows.
\end{proof}

Thus we need to consider only distributive lattices $J(Q)$ of rank $n$
with at most two elements of each rank. A poset $Q$ is said to be
\emph{$(\bm{2}+\bm{2})$-free} if it does not have an induced subposet
isomorphic to the disjoint union of two 2-element chains. Such a poset
is also an \emph{interval order}
\cite{fishburn}\cite[Exer.~3.15]{ec1}\cite{trotter}. Similarly a poset
is \emph{$(\bm{1}+\bm{1}+\bm{1})$-free} or of \emph{width} at most two
if it does not have a 3-element antichain.

\begin{thm} \label{thm:distlat}
Let $Q$ be an $n$-element poset. The following conditions are
equivalent.
 \be\item[(a)] The flag $h$-vector $\beta_{J(Q)}$ of $J(Q)$ is
 multiplicity-free (in which case if $\beta_{J(Q)}(S)\neq 0$, then $S$
 is sparse).
  \item[(b)] For $0\leq i\leq n$, $J(Q)$ has at most two elements of
    rank $i$. Equivalently, $Q$ has at most two $i$-element order
    ideals. 
  \item[(c)] $Q$ is $(\bm{2}+\bm{2})$-free and
    $(\bm{1}+\bm{1}+\bm{1})$-free. 
  \ee
Moreover, if $f(n)$ is the number of nonisomorphic $n$-element posets
satisfying the above conditions, then
  $$ \sum_{n\geq 0}f(n)x^n = \frac{1-2x}{(1-x)(1-2x-x^2)}. $$
If $g(n)$ is the number of such posets which are not a nontrivial
ordinal sum (equivalently, $J(Q)$ has exactly two elements of each
rank $1\leq i\leq n-1$), then 
  \beq g(1)=g(2)=1\ \mathrm{and}\ g(n)=2^{n-3},\ n\geq 3.
    \label{eq:gn} \eeq 
\end{thm}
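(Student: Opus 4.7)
The plan is to prove the cycle $(a) \Rightarrow (b) \Rightarrow (c) \Rightarrow (a)$, then derive both enumerations. The first step, $(a) \Rightarrow (b)$, is immediate from Lemma~\ref{lemma:2el}, since $J(Q)$ is ranked by order-ideal size so rank-$i$ elements of $J(Q)$ correspond exactly to $i$-element order ideals of $Q$. For $(b) \Rightarrow (c)$ I argue the contrapositive. If $Q$ has a three-antichain $\{x, y, z\}$, then $K = \bigcup_{u \in \{x,y,z\}} \{w \in Q : w <_Q u\}$ is an order ideal omitting $x, y, z$ (each minimal outside $K$), so $K \cup \{x\}$, $K \cup \{y\}$, $K \cup \{z\}$ are three distinct order ideals of common size $|K|+1$. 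If $Q$ instead contains a $\bm{2}+\bm{2}$ with witnesses $a <_Q b$, $c <_Q d$ (cross-incomparable), then by iteratively replacing $b$ with a $Q$-cover of $a$ (and $d$ with one of $c$), I may assume $b$ covers $a$ and $d$ covers $c$ in $Q$ with nothing strictly between; the three size-two ideals $\{a,b\}$, $\{a,c\}$, $\{c,d\}$ of the induced $\bm{2}+\bm{2}$ then lift by the common down-set $\{w : w <_Q a\} \cup \{w : w <_Q c\}$ to three order ideals of $Q$ of equal size.

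For $(c) \Rightarrow (a)$, fix $Q$ satisfying (c) with its natural labeling and consider $w \mapsto D(w)$ on $\mathcal{L}(Q)$. For \emph{sparsity}, suppose $w = a_1 \cdots a_n$ has descents at both $i$ and $i+1$, so $a_{i+2} < a_{i+1} < a_i$ in $\zz$. Each pair among $\{a_i, a_{i+1}, a_{i+2}\}$ is incomparable in $Q$: any relation $a_j <_Q a_l$ with $j < l$ would force $a_j < a_l$ in $\zz$ by natural labeling, contradicting the strict decrease; and any $a_l <_Q a_j$ with $j < l$ would contradict the minimality of $a_j$ in $Q \setminus I_{j-1}$, which still contains $a_l$. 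Hence $\{a_i, a_{i+1}, a_{i+2}\}$ is a three-antichain, violating $(\bm{1}+\bm{1}+\bm{1})$-freeness. For \emph{injectivity}, suppose $w \ne w'$ with $D(w) = D(w')$ first disagree at position $k$, with $a_k < a'_k$ (without loss of generality); both are then incomparable minima of $Q \setminus I_{k-1}$ and, by $(\bm{1}+\bm{1}+\bm{1})$-freeness, are the only minima there. A case analysis comparing positions $k, k+1, \ldots$ in $w$ and $w'$, controlled by $(\bm{2}+\bm{2})$-freeness regulating how the set of minima evolves after adding $a_k$ versus $a'_k$, exhibits some $j \geq k$ at which exactly one of $w$, $w'$ has a descent, a contradiction. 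Thus $\beta_{J(Q)}(S) \in \{0, 1\}$, vanishing unless $S$ is sparse.

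A poset satisfying (c) is indecomposable with respect to ordinal sum if and only if $J(Q)$ has exactly two elements at every intermediate rank, since a single-element rank $I$ would induce the decomposition $I \oplus (Q \setminus I)$. In the rigid two-per-rank regime, each consecutive pair of interior ranks admits one of two possible cover configurations (distinguished by which rank-$i$ ideal has an extra upper cover), while the two extreme transitions are forced; a direct recursion (or transfer matrix) yields $g(1) = g(2) = 1$ and $g(n) = 2^{n-3}$ for $n \geq 3$. The unique ordinal-sum decomposition of posets satisfying (c) then gives
\[
\sum_{n \geq 0} f(n) x^n = \frac{1}{1 - G(x)}, \qquad G(x) = \sum_{n \geq 1} g(n) x^n = \frac{x(1 - x - x^2)}{1 - 2x},
\]
which simplifies to the claimed $(1-2x)/\bigl((1-x)(1-2x-x^2)\bigr)$.

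The main obstacles will be the extremal reduction in the $\bm{2}+\bm{2}$ case of $(b) \Rightarrow (c)$ (ensuring the three lifted ideals truly have equal size) and the injectivity step in $(c) \Rightarrow (a)$, which requires tracking the joint evolution of minima under both forbidden subposet conditions simultaneously.
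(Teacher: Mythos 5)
Your architecture $(a)\Rightarrow(b)\Rightarrow(c)\Rightarrow(a)$ is genuinely different from the paper's. The paper never argues directly with linear extensions: it builds every two-per-rank distributive lattice as $L(\gamma)$ for a $0$--$1$ word $\gamma$, proves $(a)\Leftrightarrow(b)$ by the recurrence~\eqref{eq:betarec} together with the ordinal-sum factorization~\eqref{eq:ordsum}, and handles $(b)\Leftrightarrow(c)$ by a separate local analysis of what three elements in a single rank of $J(Q)$ would force. Your $(a)\Rightarrow(b)$ via Lemma~\ref{lemma:2el} and your enumeration ($G(x)=x(1-x-x^2)/(1-2x)$ and $\sum f(n)x^n = 1/(1-G(x))$, which does simplify to the stated rational function) match the paper.

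However, there is a genuine gap at the heart of your $(c)\Rightarrow(a)$ step: the injectivity of $w\mapsto D(w)$ on $\mathcal{L}(Q)$ \emph{is} the content of multiplicity-freeness, and you do not prove it --- you assert that ``a case analysis \dots exhibits some $j\geq k$ at which exactly one of $w$, $w'$ has a descent'' without exhibiting the case analysis or even the invariant that organizes it. This is not a routine verification: after the first divergence at position $k$ the two extensions can track each other for many steps, and one must control how the (at most two-element) set of available minima evolves in each extension and use $(\bm{2}+\bm{2})$-freeness to force an eventual discrepancy of descent sets. Until that argument is supplied, $(c)\Rightarrow(a)$ is unproven; the paper's recurrence on $\beta_{L(\gamma)}$ sidesteps this difficulty entirely, which is presumably why that route was chosen. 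A secondary, fixable point: in the $\bm{2}+\bm{2}$ case of your $(b)\Rightarrow(c)$ contrapositive, the sets $K\cup\{a,b\}$ and $K\cup\{c,d\}$ are not order ideals in general, since an element strictly below $b$ need not lie below $a$ or below $c$; you must invoke the already-dispatched three-antichain case (so $Q$ has width at most two) together with your covering reduction to conclude that any such element would form a three-element antichain with $a$ and $c$. You flag both issues as ``obstacles,'' but flagging them is not the same as overcoming them, and the first one is where the theorem actually lives.
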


\begin{proof}
  Consider first a distributive lattices $J(Q)$ of rank $n$ with
  exactly two elements of rank $i$ for $1\leq i\leq n-1$. Such
  lattices are described in \cite[Exer.~3.35(a)]{ec1}. There is one
  for $n\leq 3$. The unique such lattice of rank three is shown in
  Figure~\ref{fig:rank3}. Once we have such a lattice $L$ of rank
  $n\geq 3$, we can obtain two of rank $n+1$ by adjoining an element
  covering the left or right coatom (element covered by $\hat{1}$) of
  $L$ and a new maximal element, as
  illustrated in Figure~\ref{fig:rank4} for $n=3$. When $n=1$ (so $L$
  is a 2-element chain), we say by convention that the $\hat{0}$ of
  $L$ is a \emph{left} coatom. When $n=2$ (so $L$ is the
  boolean algebra $B_2$) we obtain isomorphic posets by adjoining an
  element covering either the left or right coatom, so again by
  convention we always choose the \emph{right} coatom. 
  Thus every such $L$
  of rank $n\geq 1$ can be described by a word
  $\gamma=\gamma_1\gamma_2\cdots \gamma_{n-1}$, where $\gamma_1=0$
  (when $n\geq 2$), $\gamma_2=1$ (when $n\geq 3$), and $\gamma_i=0$ or
  1  for $i\geq 3$. 
  If $\gamma_i=0$, then at rank $i$ we adjoin an element on the
  left, otherwise on the right.  Write $L(\gamma)$ for this
  lattice. Figure~\ref{fig:llr} shows $L(01001)$.

\begin{figure}
 \centerline{\includegraphics[width=3cm]{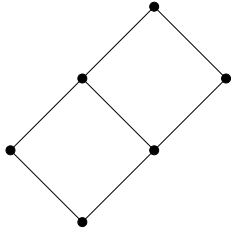}}
\caption{A distributive lattice of rank three}
\label{fig:rank3}
\end{figure}

\begin{figure}
 \centerline{\includegraphics[width=6cm]{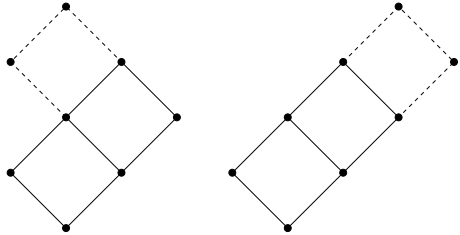}}
\caption{Extending a lattice of rank three}
\label{fig:rank4}
\end{figure}

\begin{figure}
 \centerline{\includegraphics[width=3cm]{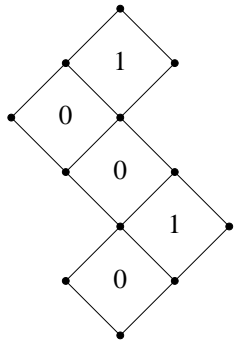}}
\caption{The distributive lattice $L(01001)$}
\label{fig:llr}
\end{figure}

Suppose that $n\geq 3$ and $\gamma$ has the form $\delta j i^r$, where
$r\geq 1$, $i=0$ or 1, and $j=1-i$. For example, if $\gamma=0111$ then
$\delta= \emptyset$ (the empty word) and $r=3$. If $\gamma=0101100$,
then $\delta=0101$ and $r=2$. Consider the lattice $L=L(\delta j
i^r)$. Then for one coatom $t$ of $L$ we have $[\hat{0},t]\cong
L(\delta j i^{r-1})$. For the other coatom $u$ of $L$ there is an
element $v<u$ such that $[v,u]$ is a chain and $[\hat{0},v] \cong
L(\delta)$. It follows easily that for $S\subseteq [n-1]$ we have the
recurrence
  \beq \beta_{L(\delta ij^r)}(S) = \left\{ \begin{array}{rl}
      \beta_{L(\delta ij^{r-1})}(S), & n-1\not\in S\\[.5em]
       \beta_{L(\delta)}(S), & n-1\in S. \end{array} \right. 
    \label{eq:betarec} \eeq
Hence by induction $\beta_L$ is multiplicity-free. Morever, if
$\beta_L(S)\neq 0$, then $S$ is sparse. 

Suppose now that $J(Q)$ has exactly one element $t$ of some rank
$1\leq i\leq n-1$. Let $[\hat{0},t]\cong J(Q_q)$ and $[t,\hat{1}]
\cong J(Q_2)$. Then $Q=Q_1\oplus Q_2$ (ordinal sum), and 
   \beq \beta_{J(Q)}(S) = \beta_{J(Q_1)}(S\cap [i-1])\cdot
      \beta_{J(Q_1)}(S'\cap [n-i-1]), \label{eq:ordsum} \eeq
where $S'=\{ j\colon i+j\in S\}$. \marginpar{??} In particular,
$\beta_{J(Q)}(S)=0$ 
if $i\in S$. It follows from Lemma~\ref{lemma:2el} and
equations~\eqref{eq:betarec} and \eqref{eq:ordsum} that (a) and (b)
are equivalent.  

Let us now consider condition (c). One can easily check that if
$L(\gamma)=J(Q(\gamma))$, then $Q(\gamma)$ is $(\bm{2}+\bm{2})$-free
and $(\bm{1}+\bm{1}+\bm{1})$-free. (Alternatively, if a poset $Q$
contains an induced $\bm{2}+\bm{2}$ or $\bm{1}+\bm{1}+\bm{1}$ then
it contains them as a convex subset, i.e., as a subset $I-I'$ where
$I\leq I'$ in $J(Q)$. By considering linear extensions of $Q$ that
first use the elements of $I'$ and then those of $I'-I$, one sees that
at least two linear extensions have the same descent set.) Thus
(b)$\Rightarrow$(c). 

Conversely, suppose that $J(Q)$ has three elements of the same rank
$i$. It is easy to see that the restriction of $J(Q)$ to ranks $i$ and
$i+1$ is a connected bipartite graph. If an element $t$ of rank $i+1$
covers at least three elements of rank $i$, then $Q$ contains an
induced $\bm{1}+\bm{1}+\bm{1}$. Otherwise there must be elements $s,t$
of rank $i+1$ and $u,v,w$ of rank $i$ for which $u,v<s$ and
$v,w<t$. The interval $[u\wedge v\wedge w,u\vee v\vee w]$ is either
isomorphic to a boolean algebra $B_3$, in which case $Q$ contains an
induced $\bm{1}+\bm{1}+\bm{1}$, or to $\bm{3}\times\bm{3}$, in which
case $P$ contains an induced $\bm{2}+\bm{2}$. Hence
(c)$\Rightarrow$(b), completing the proof of the equivalence of (a),
(b), and (c). 

We have already observed that $g(n)$ is given by
equation~\eqref{eq:gn}. Thus
  $$ A(x):= \sum_{n\geq 1} g(n)x^n = x+x^2+\frac{x^3}{1-2x}. $$
Elementary combinatorial reasoning shows that
  \beas \sum_{n\geq 0}f(n)x^n & = & \frac{1}{1-A(x)}\\
     & = & \frac{1-2x}{(1-x)(1-2x-x^2)}, \eeas
completing the proof. 
\end{proof}

\begin{cor} \label{cor:maxep}
Let $Q$ be an $n$-element poset for which $\beta_{J(Q)}$ is
multiplicity-free. Then $e(Q)\leq F_{n+1}$ (a Fibonacci number), with 
equality if and only if $Q=Q(0101\cdots)$ where $0101\cdots$ is an
alternating sequence of $n-1$ 0's and 1's. 
\end{cor}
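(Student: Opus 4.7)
The plan is to convert $e(Q)$ into a sum of flag $h$-vector entries, use multiplicity-freeness together with Theorem~\ref{thm:distlat}(a) to bound it by the count of sparse subsets, and then identify the unique extremal poset via a transfer-matrix computation on the lattices $L(\gamma)$.

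For the bound, I would invert the $\alpha$--$\beta$ relation to write $e(Q) = \alpha_{J(Q)}([n-1]) = \sum_{S \subseteq [n-1]} \beta_{J(Q)}(S)$. Since $\beta_{J(Q)}(S) = \#\{w \in \mathcal{L}(Q) : D(w) = S\}$ is a nonnegative integer, multiplicity-freeness forces $\beta_{J(Q)}(S) \in \{0,1\}$, and by Theorem~\ref{thm:distlat}(a) the entry vanishes unless $S$ is sparse. The number of sparse subsets of $[n-1]$ being the Fibonacci number $F_{n+1}$, the inequality $e(Q) \leq F_{n+1}$ follows. For equality, every sparse $S$ must satisfy $\beta_{J(Q)}(S) = 1$; in particular $\beta_{J(Q)}(\{i\}) = \alpha_{J(Q)}(\{i\}) - 1 = 1$ forces exactly two elements at each intermediate rank, so by Theorem~\ref{thm:distlat} $J(Q) = L(\gamma)$ for some binary word $\gamma = \gamma_1\cdots\gamma_{n-1}$ with $\gamma_1 = 0$ and $\gamma_2 = 1$.

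To pin down $\gamma$, label the two rank-$i$ elements of $L(\gamma)$ as $L_i$ and $R_i$. The construction in Theorem~\ref{thm:distlat} shows that if $\gamma_i = 0$ then $L_i$ covers only $L_{i-1}$ while $R_i$ covers both $L_{i-1}$ and $R_{i-1}$, and symmetrically if $\gamma_i = 1$. Consequently
\[
 e(L(\gamma)) \;=\; (1,1)\, M_{\gamma_2} M_{\gamma_3} \cdots M_{\gamma_{n-1}}\, (1,1)^T, \qquad
 M_0 = \begin{pmatrix} 1 & 1 \\ 0 & 1 \end{pmatrix}, \quad M_1 = \begin{pmatrix} 1 & 0 \\ 1 & 1 \end{pmatrix}.
\]
Writing $(a_i, b_i) := (1,1)\, M_{\gamma_2} \cdots M_{\gamma_i}$, the update from $(a_{i-1}, b_{i-1})$ is $(a,b)\mapsto(a, a+b)$ if $\gamma_i = 0$ and $(a,b)\mapsto(a+b, b)$ if $\gamma_i = 1$. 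An induction on $i$ then yields $\max(a_i,b_i) \leq F_{i+1}$ and $\min(a_i,b_i) \leq F_i$, with simultaneous equality (so $a_i + b_i = F_{i+2}$) if and only if $\gamma_2\cdots\gamma_i$ alternates; any repeated letter produces $\max(a_i,b_i) < F_{i+1}$, and since the next maximum equals the previous sum, the strict inequality persists to the end. Taking $i = n-1$ forces $\gamma = 0101\cdots$. The converse is Proposition~\ref{prop:idclass}: the alternating $\gamma$ gives precisely the poset $Q_n$ of Section~\ref{sec1}, whose set of linear extensions is the class $\ec$ of cardinality $F_{n+1}$.

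The main obstacle will be carrying out the transfer-matrix induction rigorously: one must verify that both $M_0$ and $M_1$ preserve the invariants $\max(a_i,b_i) \leq F_{i+1}$ and $\min(a_i,b_i) \leq F_i$, that these are saturated simultaneously only in the alternating case, and most delicately that once saturation is broken at some step it cannot be restored by any later choice.
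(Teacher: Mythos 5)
Your derivation of the inequality coincides with the paper's: both count the $F_{n+1}$ sparse subsets of $[n-1]$ and invoke the parenthetical remark in Theorem~\ref{thm:distlat}(a) together with the nonnegativity of $\beta_{J(Q)}(S)=\#\{w\in\mathcal{L}(Q):D(w)=S\}$. For the equality case you diverge. The paper disposes of it in one line by citing the recurrences \eqref{eq:betarec} and \eqref{eq:ordsum}, which show directly that $\beta_{J(Q)}(S)=1$ for \emph{every} sparse $S$ exactly when $\gamma$ alternates. You instead first force $J(Q)=L(\gamma)$ via $\beta_{J(Q)}(\{i\})=\alpha_{J(Q)}(\{i\})-1=1$ (a correct and clean reduction that also handles the ordinal-sum case) and then count maximal chains of $L(\gamma)$ by a transfer matrix. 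Your matrices $M_0,M_1$ do encode the covering structure of $L(\gamma)$ correctly, and the approach is self-contained: the alternating word visibly generates the Fibonacci recursion, so you get the converse for free without appealing to Proposition~3.11. The price is that you re-derive by hand information that \eqref{eq:betarec} already packages.

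One intermediate claim in your induction is false as stated: a repeated letter does \emph{not} immediately force $\max(a_i,b_i)<F_{i+1}$. For instance $\gamma_2\gamma_3=11$ gives $(a_3,b_3)=(3,1)$, so $\max=3=F_4$; what breaks first is the minimum ($1<F_3=2$) and hence the sum ($4<F_5$). The repair is to track the sum rather than the maximum. Each step sends the unordered pair $\{\max,\min\}$ to $\{\max+\min,\;x\}$ with $x$ equal to whichever coordinate is kept, so the new maximum is $a_i+b_i\le F_{i+1}+F_i=F_{i+2}$ and the new minimum is at most $\max(a_i,b_i)\le F_{i+1}$; consequently once $a_i+b_i<F_{i+2}$ holds at some step, it holds at every later step, and in particular $e(Q)=a_{n-1}+b_{n-1}<F_{n+1}$. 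Saturation of the sum at every step requires keeping the larger coordinate each time, and since the larger coordinate always sits in the slot just created by the previous letter, this forces $\gamma_{i+1}\ne\gamma_i$, i.e.\ alternation. With that substitution your argument is complete and correct.
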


\begin{proof}
There are $F_{n+1}$ sparse subsets of $[n-1]$. It follows from the
parenthetical comment in Theorem~\ref{thm:distlat}(a) that $e(Q)\leq
F_{n+1}$. Moreover, equations~\eqref{eq:betarec} and
\eqref{eq:ordsum} make it clear that $\beta_{J(Q)}(S) =1$ for all
sparse $S\subset [n-1]$ if and only if $Q=Q(0101\cdots)$, so the proof
follows. 
\end{proof}

\begin{prop}
The $n$-element poset $Q(0101\cdots)$ is isomorphic to $Q_{12\cdots
  n}$. Hence The set $\mathcal{L}(Q(0101\cdots)$ of linear extensions
of $Q(0101\cdots)$ is equal to the equivalence class $\langle 12\cdots 
n\rangle$. 
\end{prop}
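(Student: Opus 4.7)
The plan is to avoid exhibiting an explicit order-isomorphism and instead pin down both $Q_n$ and $Q(0101\cdots)$ via the classification already proved. By the partially commutative monoid discussion preceding Figure \ref{fig:12345} (with $g_i=i$ and $w=12\cdots n$), the words in the $g_i$ equal to $w$ are precisely the linear extensions of $Q_w = Q_n$. Hence $\mathcal{L}(Q_n) = \ec$, and $Q_n$ is a natural partial ordering of $[n]$ because the definition of $Q_w$ only places $a <_{Q_n} b$ when $a<b$ in $\zz$.

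With this identification in hand, I would apply Lemma \ref{lemma:ds1n}: the descent-set map $D\colon \ec \to 2^{[n-1]}$ takes values in the sparse subsets of $[n-1]$ and is a bijection onto them. The $P$-partition formula $\beta_{J(Q_n)}(S) = \#\{w \in \mathcal{L}(Q_n) : D(w)=S\}$ recalled at the start of Section~3 then gives $\beta_{J(Q_n)}(S)=1$ for every sparse $S$ and $0$ otherwise, so $\beta_{J(Q_n)}$ is multiplicity-free and moreover $e(Q_n) = |\ec| = F_{n+1}$.

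By the uniqueness clause of Corollary \ref{cor:maxep}, the only $n$-element poset (up to isomorphism) satisfying these two conditions is $Q(0101\cdots)$, so $Q_n \cong Q(0101\cdots)$. The second assertion of the statement is then immediate: $\mathcal{L}(Q(0101\cdots)) = \mathcal{L}(Q_n) = \ec$. I do not expect any genuine obstacle here; the only point that deserves a line of verification is the naturality condition $i<_{Q_n}j \Rightarrow i<j$ in $\zz$, which is built into the general definition of $Q_w$. A direct approach that constructed the isomorphism by hand would instead require unwinding the inductive construction of $Q(0101\cdots)$ from the lattices $L(0101\cdots)$, which looks considerably more cumbersome than the indirect route above.
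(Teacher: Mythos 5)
Your proposal is correct and is essentially the paper's argument: the paper's one-line proof (``Immediate from Lemma~\ref{lemma:ds1n}'') tacitly relies on exactly the chain you spell out, namely that $\mathcal{L}(Q_n)=\langle 12\cdots n\rangle$ has precisely one linear extension per sparse descent set, hence $\beta_{J(Q_n)}$ is multiplicity-free with $e(Q_n)=F_{n+1}$, and the uniqueness clause of Corollary~\ref{cor:maxep} then forces $Q_n\cong Q(0101\cdots)$. Your version is simply a fuller write-up of the same route, including the worthwhile observation that $Q_n$ is naturally labeled so that the $P$-partition formula applies.
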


\begin{proof}
Immediate from Lemma~\ref{lemma:ds1n}.
\end{proof}

\section{Multiplicity-free flag $h$-vectors of graded posets.}

We now consider \emph{any} graded poset $P$ of rank $n$ for which
$\beta_P$ is multiplicity-free. By Lemma~\ref{lemma:2el} there are at
most two elements of each rank. If there is just one element $t$ of
some rank $1\leq i\leq n-1$, then let $P_1=[\hat{0},t]$ and
$P_2=[t,\hat{1}]$. Equation~\eqref{eq:ordsum} generalizes
easily to
  $$ \beta_P(S) =\left\{ \begin{array}{rl}
      0, & i\in S\\ \beta_{P_1}(S\cap [i-1])\cdot 
       \beta_{P_2}(S'\cap [n-i-1], i\not\in S, \end{array} \right. $$ 
where $S_1=\{j\colon i+j\in S\}$. Hence $\beta_P$ is multiplicity-free
if and only if both $\beta_{P_1}$ and $\beta_{P_2}$ are
multiplicity-free. 

By the previous paragraph we may assume that $P$ has exactly two
elements of each rank $1\leq i\leq n-1$, i.e., of each \emph{interior
  rank}. There are up to isomorphism three possibilities for the
restriction of $P$ to two consecutive interior ranks $i$ and $i+1$
($1\leq i\leq n-2$). See Figure~\ref{fig:tworanks}. If only type (b)
occurs, then we obtain the distributive lattice $L(\gamma)$ for some
$\gamma$. Hence all graded posets with two elements of each interior
rank can be obtained from some $L(\gamma)$ by a sequence of replacing
the two elements of some interior rank with one of the posets in
Figure~\ref{fig:tworanks}(a) or (c). 

\begin{figure}
 \centerline{\includegraphics[width=6cm]{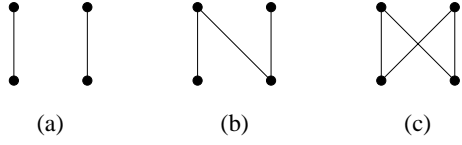}}
\caption{Two consecutive ranks}
\label{fig:tworanks}
\end{figure}

First consider the situation where we replace the two elements of some
interior rank of $P$ with the poset of
Figure~\ref{fig:tworanks}(a). We can work with the following somewhat
more general setup.  Let $R$ be any graded poset of rank $n$ with
$\hat{0}$ and $\hat{1}$. For $1\leq i\leq n-1$ let $R[i]$ denote the
\emph{stretching} of $R$ at rank $i$, namely, for each element $t\in
R$ of rank $i$, adjoin a new element $t'>t$ such that $t'<u$ whenever
$t<u$ (and no additional relations not implied by these conditions).
Figure~\ref{fig:a} shows an example.
Regarding $i$ as fixed, let $S\subset [n]$. If not both $i,i+1\in S$
then let $S^\circ$ be obtained from $S$ by replacing each element
$j\in S$ satisfying $j\geq i+1$ with $j-1$. On the other hand, if both
$i,i+1\in S$ then let $S^\circ$ be obtained from $S$ by removing $i+1$
and replacing each $j\in S$ such that $j>i+1$ with $j-1$. It is easily
checked that
  $$ \beta_{R[i]}(S) =\left\{ \begin{array}{rl}
    \beta_R(S^\circ), & \mathrm{if\ not\ both}\ i,i+1\in S\\[.5em]
       -\beta_R(S^\circ), & \mathrm{if\ both}\ i,i+1\in S.
     \end{array} \right. $$
It follows immediately that if $\beta_R$ is multiplicity-free, then so
is $\beta_{R[i]}$. 

\begin{figure}
 \centerline{\includegraphics[width=6cm]{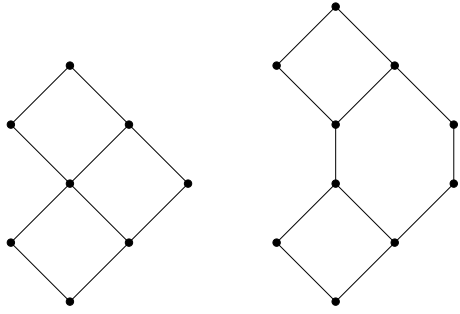}}
\caption{An example of $R$ and its stretching $R[2]$}
\label{fig:a}
\end{figure}

Now consider the situation where we replace the two elements of some
interior rank of $P$ with the poset of Figure~\ref{fig:tworanks}(c).
Again we can work in the generality of any graded poset $R$ of rank
$n$ with $\hat{0}$ and $\hat{1}$. If $1\leq i\leq n-1$, let $R\langle
i\rangle$ denote the \emph{proliferation} of $R$ at rank $i$, namely,
for each element $t\in R$ of rank $i$, adjoin a new element $t'>s$ for
every $s$ of rank $i$ such that $t'<u$ whenever $t<u$ (and no
additional relations not implied by these
conditions). Figure~\ref{fig:c} shows an example. Note that if $R_1$
denotes the restriction of $R$ to ranks $0,1,\dots, i$, and if $R_2$
denotes the restriction of $R$ to ranks 
$i,i+1,\dots,n$, then $R\langle i\rangle = R_1\oplus R_2$ (ordinal
sum). Let $\bar{R}_1$ denote $R_1$ with a $\hat{1}$ adjoined and
$\bar{R}_2$ denote $R_2$ with a $\hat{0}$ adjoined. It is then clear  
(in fact, a simple variant of equation~\eqref{eq:ordsum}) that
  $$ \beta_{R\langle i\rangle}(S) =  \beta_{R_1}(S\cap [i])\cdot
     \beta_{R_2}(S'), $$
where $S'=\{j\colon i+j\in S\}$. \marginpar{??}

\begin{figure}
 \centerline{\includegraphics[width=6cm]{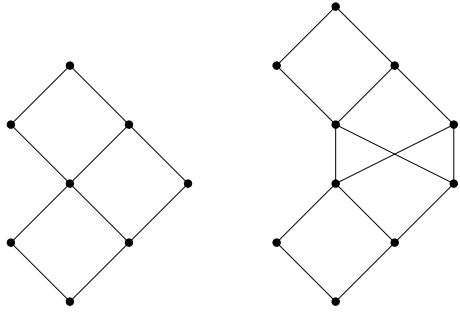}}
\caption{An example of $R$ and its proliferation $R\langle 2\rangle$}
\label{fig:c}
\end{figure}

We have therefore proved the following result.

\begin{thm} \label{thm:genmf}
Let $P$ be a finite graded poset with $\hat{0}$ and $\hat{1}$. The
following conditions are equivalent:
  \be\item[(a)] The flag $h$-vector $\beta_P$ is multiplicity-free.
   \item[(b)] $P$ has at most two elements of each rank.
  \ee
\end{thm}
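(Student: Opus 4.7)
The direction (a)$\Rightarrow$(b) is immediate from Lemma~\ref{lemma:2el}: for each interior rank $1\leq i\leq n-1$ we have $\beta_P(\{i\})=\alpha_P(\{i\})-1$, so the hypothesis $\beta_P(\{i\})\in\{0,\pm 1\}$ forces $\alpha_P(\{i\})\leq 2$.

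For (b)$\Rightarrow$(a), I would induct on the rank $n$. If some interior rank $1\leq i\leq n-1$ contains exactly one element $t$, write $P=P_1\oplus P_2$ with $P_1=[\hat 0,t]$ and $P_2=[t,\hat 1]$; both factors have rank $<n$, inherit condition~(b), and so by induction have multiplicity-free flag $h$-vector. The generalization of equation~\eqref{eq:ordsum} displayed just before Figure~\ref{fig:a} then shows that $\beta_P$ is multiplicity-free. This reduces to the case in which every interior rank of $P$ has exactly two elements.

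Now assume every interior rank of $P$ carries exactly two elements. Up to isomorphism, the restriction of $P$ to any pair of consecutive interior ranks is one of the three posets of Figure~\ref{fig:tworanks}. If every such restriction is of type~(b), then $P=L(\gamma)$ for some word $\gamma$ and $\beta_P$ is multiplicity-free by the recurrence~\eqref{eq:betarec} established in the proof of Theorem~\ref{thm:distlat}. Otherwise, pick a rank $i$ at which a type~(a) or type~(c) configuration appears; then $P=R[i]$ or $P=R\langle i\rangle$ respectively, for a strictly smaller graded poset $R$ obtained by undoing the stretching or the proliferation. The poset $R$ still satisfies~(b), so by induction $\beta_R$ is multiplicity-free, and the explicit formulas given just after the definitions of $R[i]$ and $R\langle i\rangle$ (the first being $\pm\beta_R$ on a relabeled set, the second a product of flag $h$-vectors of smaller posets) transfer multiplicity-freeness from $R$ to $P$.

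The principal obstacle is combinatorial bookkeeping rather than algebra: one must verify that every Hasse ``slab'' between consecutive interior ranks of width two falls into precisely one of the three figures, and that when a type~(a) or type~(c) slab is present the corresponding inverse operation produces a bona fide graded poset $R$ that still has two elements at each interior rank. Once that structural step is in place, the multiplicity-free conclusion drops out immediately from the displayed formulas, since stretching can only change the sign of a $\beta$-value and proliferation factors it as a product of smaller $\beta$-values.
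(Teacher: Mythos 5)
Your proposal is correct and follows essentially the same route as the paper: Lemma~\ref{lemma:2el} for (a)$\Rightarrow$(b), ordinal-sum reduction at single-element ranks, and then the observation that a two-elements-per-rank poset is built from some $L(\gamma)$ by stretchings and proliferations, each of which preserves multiplicity-freeness via the displayed $\beta$-formulas. The only cosmetic difference is that you peel these operations off inductively from the top down while the paper builds the poset up from $L(\gamma)$; the content is identical.
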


The above description of graded posets with multiplicity-free flag
$h$-vectors allows us to enumerate such posets.

\begin{thm} \label{thm:mfenum}
Let $h(n,k)$ denote the number of $k$-element graded posets $P$ of
rank $n$ with $\hat{0}$ and $\hat{1}$ for which $\beta_P$ is
multiplicity-free. Let
  $$ U(x,y) = \sum_{n\geq 1}\sum_{k\geq 2}h(n,k)x^ky^n. $$
Then
   $$ U(x,y) =\frac{xy^2(1-xy^2)(1-3xy^3)} 
     {1-xy-5xy^2+4x^2y^3+5x^2y^4-3x^3y^5}. $$
\end{thm}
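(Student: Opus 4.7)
By Theorem~\ref{thm:genmf}, the posets being enumerated are exactly the finite graded bounded posets with at most two elements of each rank. My approach is to reduce to enumerating ``atomic'' pieces via ordinal-sum decomposition, and then to compute the generating function for atomic pieces via a transfer-matrix argument based on the three local transition types (a), (b), (c) of Figure~\ref{fig:tworanks}. Specifically, I would decompose any such $P$ uniquely as an ordinal sum $P = P_1 \oplus \cdots \oplus P_s$ by cutting at each interior rank that carries a single element; each atomic summand $P_i$ is either the 2-chain or has exactly two elements at every interior rank. Let $A(x,y)$ denote the generating function for atomic pieces (marked by elements and rank). Since ordinal sum identifies $\hat 1$ of one piece with $\hat 0$ of the next, deleting one element per gluing, one has
$$ U(x,y) \;=\; \sum_{s\geq 1} A\!\left(\frac{A}{x}\right)^{s-1} \;=\; \frac{x\,A(x,y)}{x - A(x,y)}, $$
so it suffices to determine $A$.

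To compute $A$, I would encode an atomic piece of rank $n\geq 3$ by its sequence of transition types $\tau_1,\ldots,\tau_{n-2}$ between consecutive interior ranks, together with the forced boundary transitions immediately above $\hat 0$ and below $\hat 1$. Each such transition adds two elements and one to the rank, contributing a factor $x^2 y$. The essential device is a finite state space tracking the ``symmetry type'' of the current interior rank---roughly, whether its two elements can be interchanged by an automorphism of the subposet constructed so far---with transition weights recording the number of non-isomorphic completions consistent with each state. Summing over walks between the appropriate initial and terminal states then yields $A(x,y)$ as a rational function via the standard transfer-matrix machinery of \cite[\S4.7]{ec1}, and substituting into $U = xA/(x-A)$ followed by simplification should produce the claimed closed form. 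The small-rank cases $n=1,2$ must be handled separately and contribute the boundary corrections visible in the numerator.

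The main obstacle is designing the transfer matrix so that each isomorphism class of atomic piece is counted exactly once. Because the two elements at each interior rank are unlabeled, a locally symmetric transition (type (a) matching or type (c) complete bipartite) can collapse previously distinguishable histories into a single isomorphism class, while a locally asymmetric transition (type (b) path) can introduce a new distinction that persists until it is absorbed by a later symmetric transition. Careful bookkeeping of these interactions---identifying the right state space, the transition weights, and the degenerate boundary contributions---is what should produce the coefficients $5$, $4$, $5$, $3$ in the denominator of $U$ and the factors $1-xy^2$ and $1-3xy^3$ in the numerator. Once the transfer matrix is correctly set up, the resulting rational expression for $A$, together with the identity $U = xA/(x-A)$, is routine to verify against the stated formula by low-order expansion.
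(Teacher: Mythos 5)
Your high-level frame is right: reduce via Theorem~\ref{thm:genmf} to bounded graded posets with at most two elements of each rank, decompose as an ordinal sum, and express $U$ rationally in terms of the generating function for the pieces; your gluing identity $U = xA/(x-A)$ is correct bookkeeping for closed pieces sharing endpoints (the paper instead strips $\hat 0,\hat 1$ and uses $1/(1-R)$ for open pieces, which is equivalent). But your proof stops exactly where the content of the theorem begins: the enumeration of the pieces up to isomorphism. You identify the transfer matrix with ``symmetry-type'' states as the main obstacle and then assert that, once it is correctly set up, the coefficients $5,4,5,3$ ``should'' emerge; no state space, no transition weights, no boundary conditions are actually specified, and checking ``by low-order expansion'' is not a proof. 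As written, the argument establishes at most that $U$ is rational, not the stated formula.

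The gap is compounded by a missed structural fact that makes the whole computation elementary. A type~(c) transition (the complete bipartite graph between two consecutive two-element ranks) is itself an ordinal-sum cut: this is the proliferation construction, $R\langle i\rangle = R_1\oplus R_2$. Consequently your ``atomic'' pieces, which you allow to contain type~(c) transitions, are not ordinal-sum-indecomposable; the true indecomposable summands are the one-element poset and the posets whose consecutive-rank transitions are all of type~(a) or~(b). Once this is observed no transfer matrix is needed: each indecomposable piece is obtained from a pure type-(b) poset --- already counted up to isomorphism ($2^{m-2}$ with $m$ levels, via the classification behind equation~\eqref{eq:gn}) --- by freely stretching each level, i.e.\ inserting type-(a) transitions, which contributes one geometric-series factor per level. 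Summing over $m$ gives the generating function for indecomposables in closed form, and the ordinal-sum composition then yields $U$. If you insist on keeping type~(c) inside the atomic pieces, you must actually solve the isomorphism bookkeeping you describe, and that is precisely the work your proposal defers.
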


\begin{proof}
The factor $xy^2$ in the numerator accounts for the $\hat{0}$ and
$\hat{1}$ of $P$. Write $P'=P-\{\hat{0},\hat{1}\}$. We first consider
those $P'$ that are not an ordinal sum of smaller nonempty
posets. These 
will be the one-element poset $\bm{1}$ and posets for which every rank
has two elements, with the restrictions to two consecutive ranks given
by Figure~\ref{fig:tworanks}(a,b). We obtain $P'$ by first choosing a
poset $R$ whose consecutive ranks are given by
Figure~\ref{fig:tworanks}(b) and then doing a sequence of
stretches. By equation~\eqref{eq:gn}, the number of ways to choose $R$
with $m$ levels is 1 for $m=1$ and $2^{m-2}$ for $m\geq 2$. We can
stretch such an $R$ by choosing a sequence $(j_1,\dots,j_m)$ of
nonnegative integers and stretching the $i$th level of $R$ $j_i$
times. Hence the generating function for the posets $P'$ is given by
  \beas R(x,y) & = &  xy+\frac{xy^2}{1-xy^2} +\sum_{m\geq 2}
      \frac{2^{m-2}(xy^2)^m}{(1-xy^2)^m}\\ & = & 
      xy +\frac{xy^2}{1-xy^2}+\frac{x^2y^4}
      {(1-xy^2)(1-3xy^2)}. \eeas
All posets being enumerated are unique ordinal sums of posets $P'$,
with a $\hat{0}$ and $\hat{1}$ adjoined at the end. Thus
   \beas  U(x,y) & = & \frac{xy^2}{1-R(x,y)}\\
   & = & \frac{xy^2(1-xy^2)(1-3xy^2)} 
     {1-xy-5xy^2+4x^2y^3+5x^2y^4-3x^3y^5}. \eeas
\end{proof}

As special cases, the enumeration by rank of graded posets $P$ with
$\hat{0}$ and $\hat{1}$ for which $\beta_P$ is multiplicity-free is
given by
  \beas R(x,1) & = & \frac{x(1-x)(1-3x)}{1-6x+9x^2-3x^3}\\ & = & 
     x+2x^2+6x^3+21x^4+78x^5+297x^6+1143x^7+4419x^8+\cdots. \eeas
Similarly, if we enumerate by number of elements we get
   \beas R(1,y) & = & 
     \frac{y^2(1-y^2)(1-3y^2)}{1-y-5y^2+4y^3+5y^4-3y^5}\\ & = &
     y^2+y^3+2y^4+3y^5+7y^6+12y^7+28y^8+51y^9+117y^{10}+\cdots. 
    \eeas

\end{document}